\documentclass[10pt]{amsart}

\usepackage{amssymb,amsmath,amsthm,amsfonts}
\usepackage{pdfsync}

\newcommand{\comment}[1]{}

\newtheorem{lem}{Lemma}%[section]
\newtheorem{propn}{Proposition}

\newtheorem{thm}{Theorem}

\newtheorem*{thmA}{Theorem A}
\newtheorem*{thmB}{Corollary B}
\newtheorem*{thmC}{Corollary C}
\theoremstyle{remark}
\newtheorem*{Rem}{Remark}
\theoremstyle{definition}

\newcommand{\R}{\mathbb R}

\newcommand{\Z}{\mathbb Z}
\newcommand{\T}{\mathbb T}

\newcommand{\N}{\mathbb N}
\newcommand{\C}{\mathbb C}

\DeclareMathOperator{\supp}{supp}

\DeclareMathOperator{\lcm}{lcm}

\newcommand{\vp}{\varphi}
\newcommand{\D}{\delta}

\newcommand{\VE}{\varepsilon}
\newcommand{\A}{\alpha}
\newcommand{\B}{\beta}
\newcommand{\lm}{\lambda}

\newcommand{\be}{\begin{equation}}
\newcommand{\ee}{\end{equation}}
\newcommand{\bee}{\begin{equation*}}
\newcommand{\eee}{\end{equation*}}

% FORMATTING INSTRUCTIONS
%
\setlength{\topmargin}{0pt}
\setlength{\oddsidemargin}{.0in}
\setlength{\textwidth}{6.5truein}
\setlength{\textheight}{9truein}
\setlength{\evensidemargin}{.0in}
\begin{document}
\title{Optimal Polynomial Recurrence}
\author{Neil Lyall\quad\quad\quad\'Akos Magyar}
\thanks{Both authors were partially supported by NSF grants.}

\address{Department of Mathematics, The University of Georgia, Athens, GA 30602, USA}
\email{lyall@math.uga.edu}
\address{Department of Mathematics, University of British Columbia, Vancouver, B.C. V6T 1Z2, Canada}
\email{magyar@math.ubc.ca}

%\subjclass[2000]{}
%\keywords{}

\begin{abstract}
Let $P\in\Z[n]$ with $P(0)=0$ and $\VE>0$.
We show, using Fourier analytic techniques, that if $N\geq \exp\exp(C\VE^{-1}\log\VE^{-1})$ and $A\subseteq\{1,\dots,N\}$, then there must exist $n\in\N$ such that
\[\frac{|A\cap (A+P(n))|}{N}>\left(\frac{|A|}{N}\right)^2-\VE.\]

In addition to this we also show, using the same Fourier analytic methods, that if $A\subseteq\N$, then the set of \emph{$\VE$-optimal return times}
\[R(A,P,\VE)=\left\{n\in \N \,:\,\D(A\cap(A+P(n)))>\D(A)^2-\VE\right\}\] 
is syndetic for every $\VE>0$. Moreover, we show that $R(A,P,\VE)$ is \emph{dense} in every sufficiently long interval, in particular we show that
 there exists an $L=L(\VE,P,A)$ such that 
\[\left|R(A,P,\VE)\cap I\right|
\geq c(\VE,P)|I|\] 
for all intervals $I$ of natural numbers with $|I|\geq L$ with $c(\VE,P)=\exp\exp(-C\,\VE^{-1}\log\VE^{-1})$.
\end{abstract}
%\date{\today}
\maketitle
%\tableofcontents

\setlength{\parskip}{4pt}
%\setlength{\parindent}{0pt}

%%%%%%%%%%%%%%%%%%%%%%%%%%%%%%%%%%%%%%%%%%%%%%%

\section{Introduction.}
\subsection{Background}
The study of recurrence properties of dynamical systems goes back to the beginnings of ergodic theory. 
If $A$ is a measurable subset of a probability space $(X,\mathcal{M},\mu)$ with $\mu(A)>0$ and $T$ is a measure preserving transformation, then it was already shown by Poincar\'e \cite{P} that $\mu(A\cap T^{-n}A)>0$ for some  natural number $n$, and hence for infinitely many.

Poincar\'e's result was subsequently sharpened by Khintchine \cite{K}, who showed that sets of positive measure not only return to intersect themselves infinitely often, but in fact return ``frequently'' with ``large'' intersection.  In order to be more precisely we
recall that a set $R\subseteq\N$ is said to be \emph{syndetic} if it has bounded gaps: there
exists $L\in\N$ such that every interval of length greater than $L$ intersects $R$ non-trivially. A precise formulation of Khintchine's result is that for every $\VE>0$, the set
\be
\left\{n\in \N \,:\,\mu(A\cap T^{-n})>\mu(A)^2-\VE\right\}
\ee
is syndetic. 
Note that in general this lower bound is sharp, since $\mu(A\cap T^{-n} A)\to\mu(A)^2$ as $n\to \infty$ whenever $T$ is a mixing transformation.

The following polynomial version of Khintchine's result, where the set of natural numbers $n$ is replaced by the values of a polynomial $P\in\Z[n]$ satisfying $P(0)=0$, was established by Furstenberg \cite{F}.

\begin{thmA}[Furstenberg \cite{F}] 
Let $(X,\mathcal{M},\mu,T)$ be an invertible measure preserving system, $A\in\mathcal{M}$ and $P\in\Z[n]$ with $P(0)=0$.
For every $\VE>0$, the set
\be
\{n\in\N\,:\,\mu(A\cap T^{-P(n)} A)>\mu(A)^2-\VE\}
\ee
is syndetic.
\end{thmA}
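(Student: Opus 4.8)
The plan is to give a Fourier-analytic (spectral) proof, in keeping with the methods used elsewhere in the paper. Write $e(t)=e^{2\pi i t}$ and let $U\colon f\mapsto f\circ T$ be the Koopman operator, which is unitary since $T$ is invertible and measure preserving. By the spectral theorem there is a positive finite Borel measure $\sigma$ on $\T=\R/\Z$, the spectral measure of $1_A$, with $\mu(A\cap T^{-m}A)=\langle U^m1_A,1_A\rangle=\int_\T e(m\theta)\,d\sigma(\theta)$ for every $m\in\Z$; these quantities are real, so only the real part of the integral will matter. Because the constant function $1$ is $U$-invariant, $\sigma(\{0\})$ equals the squared $L^2$-norm of the orthogonal projection of $1_A$ onto the invariant functions, hence $\sigma(\{0\})\ge|\langle 1_A,1\rangle|^2=\mu(A)^2$ by Cauchy--Schwarz. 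Putting $\sigma'=\sigma|_{\T\setminus\{0\}}$, it therefore suffices to produce, in every sufficiently long interval of natural numbers, an $n$ with $\Rre\int_{\T\setminus\{0\}}e(P(n)\theta)\,d\sigma'(\theta)\ge-\VE$, since then $\mu(A\cap T^{-P(n)}A)\ge\mu(A)^2-\VE$ (and running the argument with $\VE/2$ recovers the strict inequality in the statement).

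Decompose $\sigma'=\sigma'_{\mathrm{cont}}+\sigma'_{\mathrm{at}}$ into continuous and atomic parts. Since $\sigma'_{\mathrm{at}}=\sum_j\sigma'(\{t_j\})\delta_{t_j}$ has finite total mass, fix $k$ with $\sum_{j>k}\sigma'(\{t_j\})<\VE/4$, so the ``small'' atoms $t_j$ ($j>k$) contribute at most $\VE/4$ in absolute value to any of the integrals above. The crucial device is then to restrict $n$ to the progression $n=q!\,m$, where $q$ is chosen (depending only on $\sigma$) large enough that every rational atom among $t_1,\dots,t_k$, say $t_i=a_i/q_i$, satisfies $q_i\le q$. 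Since $P(0)=0$, every term of $P(q!\,m)$ is divisible by $q!$, hence by $q_i$, so $e(P(q!\,m)a_i/q_i)=1$ and each such atom contributes exactly $\sigma'(\{t_i\})\ge 0$.

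It remains to average over $m$ in a long interval $I$ of natural numbers. The measure $\sigma'_{\mathrm{cont}}$ gives no mass to $\Q/\Z$, so $\theta$ is irrational $\sigma'_{\mathrm{cont}}$-a.e.; for such $\theta$ the polynomial $j\mapsto P(q!(m_0+j))$ has leading coefficient $a_d(q!)^d$ and $a_d(q!)^d\theta\notin\Q$, so Weyl's inequality gives $\frac1{|I|}\sum_{m\in I}e(P(q!\,m)\theta)\to 0$ as $|I|\to\infty$ at a rate depending only on $\theta$ and $|I|$, not on the position $m_0$ of $I$. Dominated convergence then forces $\frac1{|I|}\sum_{m\in I}\int e(P(q!\,m)\theta)\,d\sigma'_{\mathrm{cont}}(\theta)\to 0$, and the same estimate kills each irrational atom among $t_1,\dots,t_k$. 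Adding the four pieces (continuous part $\to 0$, irrational big atoms $\to 0$, rational big atoms $\ge 0$, small atoms $\ge-\VE/4$), one obtains $\frac1{|I|}\sum_{m\in I}\Rre\int_{\T\setminus\{0\}}e(P(q!\,m)\theta)\,d\sigma'(\theta)\ge-\VE$ once $|I|\ge L=L(\sigma,\VE)$. Hence some $m\in I$ is good, so every interval of natural numbers of length $\ge q!(L+1)$ contains an $n$ with $\mu(A\cap T^{-P(n)}A)>\mu(A)^2-\VE$, and the set in Theorem~A is syndetic with gaps at most $q!(L+1)$.

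The step I expect to be the real obstacle is precisely the treatment of the rational atoms of $\sigma'$: this is where the polynomial case genuinely departs from Khintchine's original argument, because for a general $P$ the complete sums $\frac1{q_i}\sum_{r\bmod q_i}e(P(r)a_i/q_i)$ can have negative real part (Gauss-sum phenomena), so one may not average over arbitrary intervals --- passing to the highly divisible progression $q!\,\N$ is exactly what forces these contributions to be nonnegative. A secondary point, needed to get syndeticity rather than merely infinitely many good $n$, is the uniformity of the Weyl estimate over shifted intervals, which holds because the van der Corput iteration only sees the leading coefficient of the polynomial. (Alternatively, Theorem~A can be deduced from the quantitative finitary statement of the abstract via the Furstenberg correspondence principle; the difficulty then migrates to the minor-arc estimate and the balancing of the parameters $\rho,q,M,N$.)
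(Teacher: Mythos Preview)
The paper does not actually prove Theorem~A; immediately after the statement it writes ``For a proof of this result we refer the reader to the presentations in \cite{M} or \cite{B}, see also \cite{LM3}.'' Theorem~A is quoted as background, and the body of the paper instead proves the quantitative finitary Theorems~\ref{finite1}--\ref{infinite2} (which imply the combinatorial Corollaries~B and~C) by lifting to the moment curve in $\Z^k$, running an energy-increment/regularity decomposition, and exploiting a dichotomy (Propositions~\ref{dichotomy1}--\ref{dichotomy2}) together with the overlapping Lemma~\ref{cor}.

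Your spectral proof is correct and is essentially the classical ergodic argument: pass to the spectral measure of $1_A$, peel off the atom at $0$ (which dominates $\mu(A)^2$ by Cauchy--Schwarz against the invariant function $1$), restrict to the highly divisible progression $q!\,\N$ to make the finitely many large rational atoms contribute nonnegatively, and kill the continuous part and the irrational atoms by Weyl. The one point that deserves a word of justification is your uniformity claim ``at a rate depending only on $\theta$ and $|I|$, not on $m_0$'': this is exactly well-distribution (not merely equidistribution) of the polynomial sequence $m\mapsto P(q!\,m)\theta$, and it does hold because Weyl's inequality bounds the exponential sum in terms of the Diophantine properties of the \emph{leading} coefficient $a_d(q!)^d\theta$ alone; with that, dominated convergence on $\sup_{m_0}|\cdot|$ gives the required uniformity and hence syndeticity.

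So your route is genuinely different from anything the paper does: you stay in the measure-preserving system and use the spectral theorem, whereas the paper works entirely on $\Z$ (and $\Z^k$) with finitary Fourier analysis. Your argument is shorter and gives Theorem~A directly, but yields no quantitative bound on the gaps (the $L$ coming out of dominated convergence depends on the spectral measure, hence on the system and $A$). The paper's approach sacrifices this directness in order to obtain the explicit bound $c(\varepsilon,P)=\exp\exp(-C\varepsilon^{-1}\log\varepsilon^{-1})$ in the combinatorial setting, which cannot be read off from the spectral proof. Your final parenthetical remark---deducing Theorem~A from the finitary statement via correspondence---is in fact closer in spirit to what the paper is doing, though the paper never carries out that deduction explicitly.
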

For a proof of this result we refer the reader to the presentations in \cite{M} or \cite{B}, see also \cite{LM3}. It follows from (two different variants of) Furstenberg's correspondence principle that Theorem A has the following two combinatorial consequences.

\begin{thmB}[Furstenberg \cite{F}, see Frantzikinakis and Kra \cite{FK}]
Let $A\subseteq\N$ and $P\in\Z[n]$ with $P(0)=0$. For every $\VE>0$, the set
\be
\left\{n\in \N \,:\,\D(A\cap(A+P(n)))>\D(A)^2-\VE\right\}
\ee
is syndetic, where $\D(B)=\limsup_{N\rightarrow\infty}|B\cap[1,N]|/N$ denotes the upper density of a given set $B\subseteq\N$.
\end{thmB}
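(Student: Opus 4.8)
The plan is to deduce Corollary B from Theorem A via Furstenberg's correspondence principle, which I would make explicit as follows. Set $\vr=\D(A)$ and fix a sequence $N_k\to\infty$ with $|A\cap[1,N_k]|/N_k\to\vr$. Realise $A$ inside the two-sided symbolic system: let $X=\{0,1\}^{\Z}$ with the product topology, let $T:X\to X$ be the left shift (a homeomorphism, hence an \emph{invertible} measure preserving transformation once equipped with an invariant measure --- this is why one must use the bi-infinite rather than the one-sided shift), let $a\in X$ be the indicator sequence of $A$, extended arbitrarily to $\Z_{\le 0}$, and let $\tilde A=\{x\in X:x(0)=1\}$, a clopen subset of $X$.

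Next I would produce the measure. Put $\mu_k=\frac1{N_k}\sum_{j=1}^{N_k}\delta_{T^j a}$ and pass to a subsequence along which $\mu_k\to\mu$ weak-$*$; the standard argument shows $\mu$ is $T$-invariant. Every cylinder set of $X$ is clopen, hence a $\mu$-continuity set, so weak-$*$ convergence computes $\mu$ exactly on cylinders. In particular
\[
\mu(\tilde A)=\lim_k\frac{|A\cap[1,N_k]|}{N_k}=\vr
\]
(it is essential that the scales $N_k$ were chosen to realise the upper density, so that this is an equality and not merely a lower bound), and for every $m\in\Z$,
\[
\mu\bigl(\tilde A\cap T^{-m}\tilde A\bigr)=\lim_k\frac{|A\cap(A-m)\cap[1,N_k]|}{N_k}\le\D\bigl(A\cap(A-m)\bigr)=\D\bigl(A\cap(A+m)\bigr),
\]
the final equality since $\D$ is translation invariant; the boundary indices $j+m\notin[1,N_k]$ number at most $|m|$ and drop out in the limit.

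Finally I would apply Theorem A to the invertible system $(X,\mathcal B,\mu,T)$, the set $\tilde A$, and the polynomial $P$ (for which $P(0)=0$): for each $\VE>0$ the set $\{n\in\N:\mu(\tilde A\cap T^{-P(n)}\tilde A)>\mu(\tilde A)^2-\VE\}$ is syndetic. Feeding in $\mu(\tilde A)=\vr$ and the displayed inequality with $m=P(n)$, every $n$ in this syndetic set obeys $\D(A\cap(A+P(n)))\ge\mu(\tilde A\cap T^{-P(n)}\tilde A)>\vr^2-\VE=\D(A)^2-\VE$, so $n$ lies in $R(A,P,\VE)$. Since any superset of a syndetic set is syndetic, $R(A,P,\VE)$ is syndetic.

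I do not expect a genuine obstacle here: this is a routine repackaging of the correspondence principle. The only points that require attention are (i) working on the two-sided shift so that $T$ is invertible, as Theorem A demands; (ii) averaging along the scales $N_k$ that realise $\D(A)$, so that $\mu(\tilde A)$ equals $\D(A)$ exactly and the target exponent $\D(A)^2$ is matched; and (iii) the harmless boundary bookkeeping in the two density computations.
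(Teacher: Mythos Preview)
Your proposal is correct and is precisely the route the paper indicates: the paper does not spell out a proof of Corollary~B but simply states that it follows from Theorem~A via (a variant of) Furstenberg's correspondence principle, and you have supplied exactly that standard argument with the usual care about invertibility, choice of scales realising $\D(A)$, and boundary terms. (The paper's own new contribution is the quantitative Theorem~\ref{infinite1}, proved by Fourier methods, which of course also recovers Corollary~B; but as a proof of Corollary~B per se, your correspondence-principle deduction is the same approach the paper cites.)
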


\begin{thmC}[Furstenberg \cite{F}, see Frantzikinakis and Kra \cite{FK}]
Let $P\in\Z[n]$ with $P(0)=0$. For every $\VE>0$ there exists $N_1=N_1(\VE,P)$ such that if $N\geq N_1$ and $A\subseteq [1,N]$, then there exists $n\in\N$ such that
\be
\frac{|A \cap(A+P(n))|}{N} >\left(\frac{|A|}{N}\right)^2-\VE.\ee
\end{thmC}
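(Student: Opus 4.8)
The plan is to establish the finitary statement directly, by a Fourier-analytic density-increment argument carried out on a cyclic group; tracking the parameters will yield the explicit $N_1=\exp\exp(C\VE^{-1}\log\VE^{-1})$ announced in the abstract, but here I will aim only at the qualitative claim. Fix $P\in\Z[n]$ of degree $k$ with $P(0)=0$ and $\VE>0$, write $\delta=|A|/N$, and note we may assume $\delta$ is bounded below in terms of $\VE$ (otherwise $\delta^2-\VE<0$ and $n=1$ already works). Introduce a ``smoothing'' modulus $Q=\lcm(1,\dots,B)$ and a length $M$, both large in terms of $\VE$ and $P$, and pick an integer $q$ slightly larger than $N+P(QM)$, so that for $1\le m\le M$ the counts $|A\cap(A+P(Qm))|$ may be computed in $\Z_q$ with no wrap-around. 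With $f=1_A$ and $\widehat f(\xi)=q^{-1}\sum_x f(x)e(-x\xi/q)$, Plancherel gives
\[
\frac1{qM}\sum_{m=1}^M|A\cap(A+P(Qm))|\;=\;\sum_{\xi\in\Z_q}|\widehat f(\xi)|^2\,W(\xi),\qquad W(\xi):=\frac1M\sum_{m=1}^M e\!\Big(\frac{\xi P(Qm)}{q}\Big).
\]
The trivial frequency contributes $|\widehat f(0)|^2=(|A|/q)^2\ge\delta^2-\VE/4$ once $N$ is large (as $q=N(1+o(1))$), so, since $q\ge N$, it suffices to show $\sum_{\xi\ne0}|\widehat f(\xi)|^2W(\xi)\ge-\VE/4$: this produces some $m$ with $|A\cap(A+P(Qm))|/N>\delta^2-\VE$, i.e.\ $n=Qm$ works.

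The heart of the matter is a major/minor decomposition of this remaining sum, organised around the Diophantine behaviour of the coefficient vector $(\beta_1,\dots,\beta_k)$, $\beta_j=\xi c_jQ^j/q$, of the polynomial $m\mapsto P(Qm)$. If this vector admits no rational approximation with a common denominator $\le B$ (the ``minor'' $\xi$), then Weyl's inequality forces $|W(\xi)|<\VE/8$ --- this is where $B$ and $M$ must be chosen large in terms of $\VE$ and $k$ --- so these $\xi$ contribute $\ge-(\VE/8)\sum_\xi|\widehat f(\xi)|^2\ge-\VE/8$. On the ``central'' arc, where every $\beta_j$ is within $\VE M^{-j}$ of an integer, one gets $\xi P(Qm)/q\in\Z+O(\VE)$ and hence $W(\xi)=1+O(\VE)\ge0$. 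This leaves the ``intermediate'' $\xi$, for which some $\beta_j$ is close to a rational $a_j/q_0$ with $2\le q_0\le B$: for these $W(\xi)$ is essentially a complete exponential (Gauss) sum, which may well be negative, so they cannot be disposed of by an estimate of this kind alone. It is precisely the averaging over the progression $n=Qm$, with $Q$ divisible by all small moduli, that makes this the only bad case: $q_0\mid Q$ gives $q_0\mid P(Qm)$ for every $m$, which both trivialises the $q_0=1$ sums above and guarantees that any subprogression produced below carries a polynomial of the same degree with zero constant term.

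Hence the dichotomy. Either the total $\ell^2$-mass of $\widehat f$ on the union of the intermediate arcs is $<\VE/8$, in which case these $\xi$ contribute $\ge-\VE/8$ and we are finished; or it is $\ge\VE/8$. The intermediate arcs lie in a bounded number (depending on $\VE$ and $P$) of arcs, each around a rational of denominator $\le D=D(\VE,P)$, so in the second case the usual $L^2$-concentration argument yields an arithmetic progression of common difference $\le D$ and length $\gg N/D$ on which $A$ has density $\ge\delta+c(\VE,P)$. Passing to it replaces $A$ by a denser set, replaces $N$ by $\gg N/D$, and --- using $q_0\mid P(Qm)$ --- replaces $P$ by an integer polynomial of the same degree with zero constant term and controlled coefficients; since density cannot exceed $1$, the increment can recur only $O_{\VE,P}(1)$ times, after which the first (flat) alternative must hold, and choosing $N_1=N_1(\VE,P)$ large enough to survive all of the steps completes the proof. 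I expect the main obstacle to be exactly this middle part: setting up the minor/central/intermediate trichotomy so that the error terms are genuinely controlled, and checking that the increment polynomial stays of bounded shape so that the iteration closes --- it is also here that keeping careful track of all the parameters (and of how large $M$ must be taken relative to $N$) produces the explicit double-exponential $N_1$.
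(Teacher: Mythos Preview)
Your single-step dichotomy is set up correctly, but the density-increment iteration does not close for the Khintchine-type bound $\delta^2-\VE$. Suppose the second alternative holds and you pass to $A'\subseteq[1,N']$ with $N'\leq N/d$ (some $d\geq 2$), density $\delta'\ge\delta+c(\VE,P)$, and a new polynomial $P'$. If after $j$ steps the flat alternative finally holds for $A^{(j)}$ on $[1,N^{(j)}]$, you obtain some $m$ with
\[
|A^{(j)}\cap(A^{(j)}+P^{(j)}(m))|>\bigl((\delta^{(j)})^2-\VE\bigr)\,N^{(j)}.
\]
Translating this pair count back to the original set gives only $|A\cap(A+P(\tilde n))|>\bigl((\delta^{(j)})^2-\VE\bigr)N^{(j)}$, and since $N^{(j)}\leq N/2^{\,j}$ this falls short of $(\delta^2-\VE)N$ as soon as a single genuine increment step has occurred: the gain $c(\VE,P)$ is of order $\VE$, not of order $\delta$, so the increase in $(\delta^{(j)})^2$ cannot compensate for the geometric loss in $N^{(j)}$. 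Density increment is the right mechanism for S\'ark\"ozy's theorem, where the target is merely $>0$ and survives division by any bounded constant, but not for the optimal lower bound $\delta^2-\VE$.

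The paper proceeds quite differently, and in fact explicitly flags this point: ``As opposed to the usual Fourier proofs of S\'ark\"ozy's theorem, which are based on density increment arguments, here we use an energy increment argument.'' After lifting $A$ to $B\subseteq[1,M]^k$ along $\gamma(n)=(n,\dots,n^k)$, the dichotomy (Proposition~\ref{dichotomy1}) reads: either there are many good $n$ in a block $(\lm,2\lm]$, or $\int_{\Omega_\lm}|\widehat{1_B}|^2\geq \VE M^k/10$, where the $\Omega_\lm$ are annuli around a \emph{fixed} nonisotropic rational lattice. One then chooses a lacunary sequence of $\lm_j$'s so that the $\Omega_{\lm_j}$ are pairwise disjoint; by Plancherel the second alternative can occur for at most $10/\VE$ values of $j$, so the first must hold somewhere. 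Crucially, the set $B$ and the ambient box never change during this iteration, so there is no shrinking of $N$ to undo. The regularity-type decomposition $f=f_1+f_2+f_3$ that underlies Proposition~\ref{smoothdichotomy} is what replaces your density-increment step, and it is exactly this replacement that makes the $\delta^2-\VE$ bound accessible.
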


We note that this correspondence gives no quantitative bounds in the finite setting of Corollary C (other than the special case when the polynomial is linear).

\begin{Rem} Recently, far reaching generalizations of Furstenberg's results (Theorem A and its corollaries) have been obtained in the settings of multiple recurrence. In particular, if $(X,\mathcal{M},\mu,T)$ is an invertible measure preserving system, $A\in\mathcal{M}$ and $P_1,\ldots,P_\ell$ be any linearly independent family of integral polynomials with $P_i(0)=0$ for all $1\leq i\leq \ell$, then Frantzikinakis and Kra \cite{FK} have shown that for every $\VE>0$, the set
\[\{n\in\N\,:\,\mu(A\cap T^{-P_1(n)}A \cap\cdots\cap T^{-P_\ell(n)}A)>\mu(A)^{\ell+1}-\VE\}\]
is syndetic, and that the analogous extensions of Corollaries B and C also hold.

A study of the (intermediate) phenomenon of simultaneous (single) polynomial recurrence was initiated by the authors in \cite{LM3}, see also \cite{LM1} and \cite{LM1'}.

For a comprehensive survey of the impact of the Poincar\'e recurrence principle in ergodic theory, especially as pertains to the field of ergodic Ramsey theory/additive combinatorics, see \cite{FM}, \cite{B2} and \cite{Kra}.
\end{Rem}

\subsection{Statement of Main Results}

The purpose of this article is to establish, using Fourier analytic methods, the following quantitative versions of Corollaries C and B.

\begin{thm}\label{finite1}
Let $A\subseteq[1,N]$, $P\in\Z[n]$ with $P(0)=0$ and $\VE>0$, then
\bee
\left|\left\{n\in[0,L]\,:\, \frac{|A\cap(A+P(n))|}{N}> \left(\dfrac{|A|}{N}\right)^2-\VE\right\}\right|\geq c(\VE,P)L
%\dfrac{|A\cap(A+P(d))|}{N}>\left(\dfrac{|A|}{N}\right)^2-\VE.
\eee
for all $1\leq L\leq N^{1/k}$ where $k=\deg(P)$ and $c(\VE,P)=\exp\exp(-C\VE^{-1}\log\VE^{-1})$.
\end{thm}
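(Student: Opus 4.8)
The plan is to locate the good values of $n$ directly among those for which $P(n)$ is divisible by a suitable modulus, using a Fourier decomposition of $\mathbf 1_A$ into a ``structured mod $W$'' part and a ``pseudorandom'' part, the latter controlled by Weyl's inequality. First some reductions. Put $\delta=|A|/N$. If $\delta<\sqrt\VE$ then $\delta^2-\VE<0$ and every $n$ (including $n=0$) lies in the set; if $c(\VE,P)L<1$ then $n=0$ alone suffices. Since $|P(n)|$ can be a constant multiple of $N$ when $n$ is near $N^{1/k}$, I would first prove the estimate under the hypothesis $\|P\|_1 L^k\le\VE N$ — where $\|P\|_1$ is the sum of the absolute values of the coefficients of $P$, so that $|P(n)|\le\VE N$ for all $n\in[0,L]$ — and then recover the full range $1\le L\le N^{1/k}$ by applying the restricted statement at $L'=\lfloor(\VE N/\|P\|_1)^{1/k}\rfloor$ and absorbing the factor $L'/L\ge c\,\VE^{1/k}\|P\|_1^{-1/k}$ into $c(\VE,P)$. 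One may also assume $L$ exceeds a threshold $L_0=\VE^{-O_k(1)}$ needed for Weyl's inequality below, since otherwise again $c(\VE,P)L<1$.

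Fix $Q=\VE^{-O_k(1)}$ (the Weyl threshold in degree $k$) and $W=\lcm(1,\dots,Q)$, and write $\mathbf 1_A=f_\sharp+f_\flat$, where $f_\sharp$ is the conditional expectation of $\mathbf 1_A$ onto the partition of $\{1,\dots,N\}$ into residue classes mod $W$: thus $f_\sharp$ is constant on each such class with value in $[0,1]$, $\sum f_\sharp=|A|$, and $f_\flat$ has mean zero on each class, so that $\widehat{f_\flat}(a/q)=0$ for every $q\le Q$. By Cauchy--Schwarz $\|f_\sharp\|_2^2\ge(\sum f_\sharp)^2/N=\delta^2N$. The key observation is that whenever $W\mid P(n)$ — in particular for every multiple of $W$, of which there are at least $L/W$ in $[0,L]$ — the functions $f_\sharp$ and $f_\sharp(\cdot-P(n))$ agree off a set of at most $|P(n)|\le\VE N$ integers, whence $\langle f_\sharp,f_\sharp(\cdot-P(n))\rangle\ge\|f_\sharp\|_2^2-\VE N\ge\delta^2N-\VE N$. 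Writing $E(n):=|A\cap(A+P(n))|-\langle f_\sharp,f_\sharp(\cdot-P(n))\rangle$ for the sum of the $f_\flat$-terms in the expansion of $\langle\mathbf 1_A,\mathbf 1_A(\cdot-P(n))\rangle$, one thus has, for every such $n$,
\[ \frac{|A\cap(A+P(n))|}{N}\ \ge\ \delta^2-\VE+\frac{E(n)}{N}, \]
and the problem reduces to showing that $E(n)\ge-C\VE N$ for a positive proportion of the multiples $n$ of $W$ in $[0,L]$.

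For a single $n$ there is no cancellation in $E(n)$ (one has only $|E(n)|\lesssim N$), so one averages over $n=Wm$, $1\le m\le L/W$. Since $P(Wm)=W\widetilde P(m)$ with $\widetilde P\in\Z[m]$, $\widetilde P(0)=0$, $\deg\widetilde P=k$, the average over $m$ of $\langle f_\flat,f_\flat(\cdot-P(Wm))\rangle$ equals $\int_{\T}|\widehat{f_\flat}(\alpha)|^2S_W(\alpha)\,d\alpha$, where $S_W$ is the normalised Weyl sum of $W\widetilde P$; Weyl's inequality (applicable once $L/W\ge\VE^{-O_k(1)}$) bounds $|S_W|$ by $\VE$ off the major arcs of $W\widetilde P$, and since $Q$ may be taken to dominate the denominators occurring there, $\widehat{f_\flat}$ vanishes at the centres of those arcs — together this forces the average to be $O(\VE N)$, and likewise the cross term. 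Combining this $\ell^1$-bound with the trivial pointwise bound $E(Wm)\le N$ yields, by a short extremal argument, at least $\gtrsim\VE\,(L/W)$ values of $m$ with $E(Wm)\ge-C\VE N$; rescaling $\VE$ at the outset, these produce $\ge c(\VE,P)L$ values of $n\in[0,L]$ with $|A\cap(A+P(n))|/N>\delta^2-\VE$, where $c(\VE,P)\asymp\VE/W$.

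The hard part is the major-arc estimate just invoked. Because the major arcs of a Weyl sum are intervals of positive radius, not points, the bound $\int|\widehat{f_\flat}|^2S_W=O(\VE N)$ cannot be obtained merely from the vanishing of $\widehat{f_\flat}$ at the arc centres: one must use the explicit decay of $S_W$ away from low-denominator rationals together with the precise (Dirichlet-kernel) shape and location of the spectral mass of $f_\flat$, and handle the cross term similarly. This requires $N$, equivalently $L$, to be large compared with powers of $W$; and since $W=\lcm(1,\dots,Q)$ is exponentially large in $Q=\VE^{-O_k(1)}$, the resulting chain of constraints $\VE^{-O_k(1)}\le L/W$ and $\|P\|_1 L^k\le\VE N$ is exactly what forces the nontrivial range of $N$, and hence the smallness of $c(\VE,P)$, to be double-exponential in $\VE^{-1}\log\VE^{-1}$. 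Carrying all of this out with constants uniform in $\delta$ — which is possible precisely because we reduced to $\delta\ge\sqrt\VE$, so that the relevant modulus $W$ depends only on $\VE$ and $k$ — is where the bulk of the work lies; the passage from the single-polynomial Weyl analysis to its refinement over the sub-family $\{P(Wm)\}_m$ is the principal obstacle.
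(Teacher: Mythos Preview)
Your outline has the right large-scale architecture --- decompose $\mathbf 1_A$ into a structured piece and a uniform piece, use approximate shift-invariance of the structured piece along $n$ with $W\mid n$ for the main term, and average out the uniform piece via a Weyl-sum estimate --- but the specific decomposition you choose does not close. Taking $f_\sharp$ to be the conditional expectation onto residue classes modulo $W$ gives you exactly the pointwise vanishing $\widehat{f_\flat}(a/q)=0$ for $q\mid W$, and nothing more. On a major arc of width $\rho$ around $a/q$ the Weyl sum is essentially $1$, while $|\widehat{f_\flat}|$ can be as large as $\delta N$ throughout that arc: the function $f_\flat=\mathbf 1_A-f_\sharp$ is an arbitrary bounded function of mean zero on each class, so $\widehat{f_\flat}$ has no ``Dirichlet-kernel shape'' to exploit --- all of the possible shape sits in $\widehat{f_\sharp}$, not in $\widehat{f_\flat}$. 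Thus $\int_{\text{major}}|\widehat{f_\flat}|^2|S_W|$ can be of order $N$, not $\VE N$, and the averaging argument for $E(n)$ fails. (Concretely: if $A$ has strong structure modulo $W'$ for some $W'$ coprime to all $q\le Q$ but with $1/W'$ lying inside the major arc about $0$, then $f_\sharp$ misses this structure entirely and the major-arc mass of $\widehat{f_\flat}$ is large.) Your final paragraph correctly identifies this as the crux but does not supply a mechanism to handle it; and note that your displayed bound $c(\VE,P)\asymp\VE/W$ with $W=\exp(\VE^{-O_k(1)})$ is only single-exponential, inconsistent with the double-exponential you claim at the end --- this discrepancy is a symptom of the missing step.

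What the paper does instead is twofold. First, it lifts the problem to $\Z^k$ via the linear map $\mathcal P(b)=c_1b_1+\cdots+c_kb_k$, reducing the general polynomial $P$ to the moment curve $\gamma(n)=(n,\dots,n^k)$; this separates the arithmetic of $P$ from the analysis. Second --- and this is the point you are missing --- it replaces your hard projection onto residue classes by a \emph{smooth} two-scale decomposition $f=f_1+f_2+f_3$ with $f_1=f*\psi_{q,L_1}$ and $f_2=f-f*\psi_{q,L_2}$, where $\widehat\psi_{q,L}$ is a smooth bump supported on genuine major boxes, so that $\widehat{f_2}=(1-\widehat\psi_{q,L_2})\widehat f$ is small on the \emph{entire} major arc, not just at its centre. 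The scale $L_2$ and the modulus $q$ cannot be chosen in advance from $\VE$ alone: one runs a pigeonhole over a geometric sequence of about $C/\VE$ candidate scales $\eta_j$ (each a factor $\asymp\VE$ smaller than the last) and picks the $j$ at which the telescoping increment $\|\widehat\psi_{q_{j+1},L_2}-\widehat\psi_{q_j,L_2}\|_\infty$ is at most $\VE/40$. This energy-pigeonhole is exactly what produces $\eta_\VE=\exp(-C\VE^{-1}\log\VE^{-1})$ and, after taking $q_\VE=\lcm\{q\le\eta_\VE^{-k}\}$, the double exponential in $c(\VE,P)$. Without an analogue of this scale selection your argument cannot control the annular contribution $f_3$ (in your language, the major-arc part of $f_\flat$), and the proof does not go through.
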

Note that in order to obtain a non-trivial conclusion from Theorem \ref{finite1} we must have $L\geq c(\VE,P)^{-1}$ and consequently also $N\geq c(\VE,P)^{-k}$. In particular, this implies Corollary C with $N_1(\VE, P)=\exp\exp(C\VE^{-1}\log\VE^{-1})$.
\begin{Rem}
In a supplementary document \cite{LM2'} we give a proof of Corollary C with the quantitative bound $N_1(\VE, n^2)=\exp\exp(C\VE^{-1}\log\VE^{-1})$, in the special case where the polynomial $P(n)=n^2$.  While the presentation in \cite{LM2'} closely follows that of the current paper, many of the analogous arguments are significantly less technical and as such we feel that the reader may find the exposition in \cite{LM2'} illuminating.
 \end{Rem}
 % due independently to S\'ark\"ozy \cite{Sarkozy} and Furstenberg \cite{Fur}.

\begin{thm}\label{infinite1}
Let $A\subseteq\N$, $P\in\Z[n]$ with $P(0)=0$ and $\VE>0$, then 
there exists $L=L(\VE,P,A)$ such that 
\bee
\left|\left\{n\in I \,:\,\D(A\cap(A+P(n)))>\D(A)^2-\VE\right\}\right|
\geq c(\VE,P)|I|
\eee
for all intervals $I$ of natural numbers with $|I|\geq L$ and $c(\VE,P)=\exp\exp(-C\,\VE^{-1}\log\VE^{-1})$.
\end{thm}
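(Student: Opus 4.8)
The plan is to deduce Theorem~\ref{infinite1} from the \emph{proof} of Theorem~\ref{finite1}, by running that argument at a scale $N$ large enough for the infinite setting to carry no boundary loss and then averaging over scales. The first step is to isolate the following ``boundary-free'' finite statement $(\star)$, which is exactly what the circle-method argument behind Theorem~\ref{finite1} produces: for every $\VE>0$ and $P\in\Z[n]$ with $P(0)=0$ there are $\lm=\lm(\VE,P)$ and $c(\VE,P)=\exp\exp(-C\VE^{-1}\log\VE^{-1})$ so that, whenever $B\subseteq[1,N]$ and $I$ is \emph{any} interval of natural numbers with $|I|\geq\lm$ and $\max_{n\in I}|P(n)|\leq\VE N/100$,
\[
\Bigl|\bigl\{n\in I:\ |B\cap(B+P(n))|/N>(|B|/N)^{2}-\VE\bigr\}\Bigr|\ \geq\ c(\VE,P)\,|I|.
\]
The hypothesis $\max_{n\in I}|P(n)|\leq\VE N/100$ here plays the role that $L\leq N^{1/k}$ plays in Theorem~\ref{finite1}: it forces $|B\cap(B+P(n))|$ to agree up to $o(N)$ with the honest correlation $\sum_{x}\mathbf 1_{B}(x)\mathbf 1_{B}(x-P(n))$, and it forces the major-arc main term of $\int_{\T}|\widehat B(\xi)|^{2}e(-P(n)\xi)\,d\xi$ to equal $(|B|/N)^{2}+O(\VE)$ \emph{uniformly in} $n\in I$, not just for $n$ close to $0$ --- the key point being that $n\mapsto P(n)\bmod q$ is $q$-periodic, so the complete sum $q^{-1}\sum_{r=0}^{q-1}e(-P(r)a/q)$ governing that term does not see the location of $I$, only that $|I|\geq\lm\geq Q(\VE,P)$. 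With these two observations in place, the rest of the proof of $(\star)$ --- the minor-arc bound and the increment iteration --- is word for word that of Theorem~\ref{finite1}.

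Granting $(\star)$, Theorem~\ref{infinite1} follows by a soft limiting argument. We may assume $\D(A)>0$, since for $\D(A)=0$ one has $R(A,P,\VE)=\N$ and there is nothing to prove. Put $L:=\lm(\VE/2,P)$, fix an interval $I$ with $|I|\geq L$, and set $\eta:=\min\bigl(\D(A)/2,\,\VE/(16\D(A))\bigr)>0$, so that $\eta<\D(A)$ and $2\D(A)\eta\leq\VE/8$. By definition of upper density there is a sequence $N_{t}\to\infty$ with $|A\cap[1,N_{t}]|/N_{t}>\D(A)-\eta$ for all $t$, and after discarding finitely many $t$ we may also assume $\max_{n\in I}|P(n)|\leq\VE N_{t}/200$ (legitimate because $I$ is a fixed bounded interval, so the left side is a fixed number while $N_{t}\to\infty$). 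Applying $(\star)$ with precision $\VE/2$ to $B_{t}:=A\cap[1,N_{t}]$ gives, for each $t$, a set $G_{t}\subseteq I$ with $|G_{t}|\geq c(\VE/2,P)\,|I|$ such that, for every $n\in G_{t}$,
\[
\frac{|A\cap(A+P(n))\cap[1,N_{t}]|}{N_{t}}\ \geq\ \frac{|B_{t}\cap(B_{t}+P(n))|}{N_{t}}\ >\ \Bigl(\tfrac{|B_{t}|}{N_{t}}\Bigr)^{2}-\tfrac{\VE}{2}\ >\ \bigl(\D(A)-\eta\bigr)^{2}-\tfrac{\VE}{2}\ >\ \D(A)^{2}-\tfrac{3}{4}\VE,
\]
the first inequality because $B_{t}\cap(B_{t}+P(n))\subseteq A\cap(A+P(n))\cap[1,N_{t}]$. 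Now pigeonhole over the \emph{finite} set $I$: with $I^{\ast}:=\{n\in I:\ n\in G_{t}\text{ for infinitely many }t\}$ one has $\sum_{n\in I}\#\{t\leq T:n\in G_{t}\}=\sum_{t\leq T}|G_{t}|\geq c(\VE/2,P)\,|I|\,(T-O_{A,I}(1))$, while $\sum_{n\in I\setminus I^{\ast}}\#\{t\leq T:n\in G_{t}\}=O_{A,I}(1)$ since each $n\notin I^{\ast}$ lies in only finitely many $G_{t}$; as every term $\#\{t\leq T:n\in G_{t}\}$ is at most $T$, dividing by $T$ and letting $T\to\infty$ yields $|I^{\ast}|\geq c(\VE/2,P)\,|I|$. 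For $n\in I^{\ast}$ the displayed chain holds for infinitely many $t$, whence $\D(A\cap(A+P(n)))\geq\D(A)^{2}-\tfrac{3}{4}\VE>\D(A)^{2}-\VE$; thus $I^{\ast}\subseteq R(A,P,\VE)\cap I$. Since $c(\VE/2,P)$ is again of the form $\exp\exp(-C\VE^{-1}\log\VE^{-1})$, this is Theorem~\ref{infinite1} --- in fact with $L$ depending only on $\VE$ and $P$.

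The one real obstacle is $(\star)$, and inside it the minor-arc analysis together with the iteration that turns it into a genuine lower bound on the number of good return times: one must bound the minor-arc part of $\int_{\T}|\widehat B(\xi)|^{2}e(-P(n)\xi)\,d\xi$ after averaging over $n\in I$ (a Weyl-sum / $U^{2}$ input), and then run the density- or energy-increment scheme that drives the proportion of good $n$ down only as far as $\exp\exp(-C\VE^{-1}\log\VE^{-1})$. This is exactly the substance of Theorem~\ref{finite1}, and I would import it verbatim. Everything genuinely new in Theorem~\ref{infinite1} is soft: the $q$-periodicity of $n\mapsto P(n)\bmod q$, which frees the major-arc main term from the position of $I$ and is what allows $(\star)$ --- hence the theorem --- to cover arbitrary, in particular far-out, intervals; the elementary bound $\max_{n\in I}|P(n)|\ll N$ that kills the boundary term at the chosen scale; and the pigeonhole-plus-$\limsup$ step upgrading ``large correlation at one large scale'' to ``large upper density of $A\cap(A+P(n))$''.
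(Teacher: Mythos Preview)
Your proposal has a genuine gap: the ``boundary-free'' statement $(\star)$ is false, and your claim that the proof of Theorem~\ref{finite1} delivers it is where things go wrong. Indeed, your deduction of Theorem~\ref{infinite1} from $(\star)$ produces an $L=\lm(\VE/2,P)$ that depends only on $\VE$ and $P$ --- you even flag this as a bonus in your last line --- but the paper proves in Section~\ref{A} that any such $L$ \emph{must} depend on the set $A$. The construction there gives, for any prescribed length $L$, a set $A$ of density $1/3$ and an unbounded sequence of intervals of length $L$ on which $A\cap(A+P(n))=\emptyset$ identically. Restricting this $A$ to $[1,N]$ for large $N$ is a direct counterexample to $(\star)$: all your hypotheses hold (in particular $\max_{n\in I}|P(n)|$ is a fixed number once $I$ is fixed, so it is $\leq\VE N/100$ for $N$ large), yet the return set in $I$ is empty.

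Where your argument for $(\star)$ breaks is the assertion that ``the rest of the proof\dots is word for word that of Theorem~\ref{finite1}.'' You are right that the major-arc main term is insensitive to the location of $I$ by $q$-periodicity, and you are right that the Weyl minor-arc bound is uniform in the shift $\lm$. But the paper does \emph{not} prove Theorem~\ref{finite1} (via Theorem~\ref{finite2}) by analyzing a single interval: it fills $[0,K]$ with a geometric family of blocks $(\lm_j,2\lm_j]$ and uses the pairwise \emph{disjointness} of the Fourier annuli $\Omega_j$ together with Plancherel to force a contradiction. That disjointness requires $\lm_{j+1}\asymp\eta_\VE^{-2k}\lm_j$, which is only possible because the family starts near the origin. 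For a single far-out interval $I=(\lm,\lm+\mu]$ with $\mu\ll\lm$, no such family fits inside $I$, and the dichotomy (Proposition~\ref{dichotomy2}) leaves the structured alternative open --- which is exactly what the periodic counterexample realizes. The paper's route to Theorem~\ref{infinite1} (via Theorem~\ref{infinite2}) instead runs the contradiction across \emph{many} well-separated bad intervals $I_j$ supplied by the negation of the conclusion, controlling the overlaps of $T_{\lm_j}^{-1}\Omega_j$ via the Overlapping Lemma; this is precisely why $L$ acquires its dependence on $A$.
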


We note that the parameter $L$ in Theorem \ref{infinite1} necessarily depends on the actual set $A$ in question and not just on its density, for a proof of this fact see Section \ref{A}.

We remark that Theorem \ref{infinite1} (and Corollary B) also holds if one replaces the upper density $\D$ with the upper Banach density $\D^*$ defined for $A\subseteq\N$ by $\D^*(A)=\lim_{N\rightarrow\infty}\sup_{x\in\N}|A\cap(x+[1,N])|/N$.

The strategy we will employ to prove Theorems \ref{finite1} and \ref{infinite1} is to \emph{lift} the problem in such a way that we may then apply the following analogous higher dimensional results.

\begin{thm}\label{finite2}
Let $B\subseteq[1,M]^k$, $\gamma(n)=(n,n^2,\dots,n^k)$ and $\VE>0$, then  %If $M\geq\exp\exp(C'\VE^{-1}\log\VE^{-1})$, then there exists $d\ne0$ such that
\bee
\left|\left\{n\in[0,K]\,:\, \frac{|B\cap(B+\gamma(n))|}{M^k}> \left(\dfrac{|B|}{M^k}\right)^2-\VE\right\}\right|\geq c(\VE,k)K
%\dfrac{|B\cap(B+\gamma(d))|}{M^k}>\left(\dfrac{|B|}{M^k}\right)^2-\VE.
\eee
for all $1\leq K\leq M^{1/k}$ with $c(\VE,k)=\exp\exp(-C\VE^{-1}\log\VE^{-1})$.
\end{thm}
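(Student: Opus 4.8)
The plan is to run a Fourier-analytic density increment argument on the box $[1,M]^k$, where the relevant ``frequencies'' live in the Weyl-type exponential sums $\sum_{n=1}^{K}e(\xi_1 n+\xi_2 n^2+\cdots+\xi_k n^k)$ attached to the curve $\gamma(n)=(n,n^2,\dots,n^k)$. Write $f=1_B$, let $\delta=|B|/M^k$, and introduce the normalized counting operator $\Lambda(f)(x)=K^{-1}\sum_{n=0}^{K}f(x)f(x+\gamma(n))$; averaging $\Lambda(f)$ over $x\in[1,M]^k$ produces $K^{-1}\sum_n |B\cap(B+\gamma(n))|/M^k$. The first step is to pass to the Fourier side on $(\Z/M\Z)^k$ (after the usual harmless extension of $B$ to a cyclic group of size comparable to $M$), obtaining $\sum_{\xi}|\widehat f(\xi)|^2 S_K(\xi)$ where $S_K(\xi)=K^{-1}\sum_{n=0}^{K}e(\xi\cdot\gamma(n))$. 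The $\xi=0$ term contributes exactly $\delta^2$, so it suffices to show that the off-zero frequencies cannot conspire to pull the average below $\delta^2-\VE$ on a density-$c(\VE,k)$ subset of $[0,K]$.

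The key dichotomy is \emph{Weyl's inequality / minor-arc estimates} versus \emph{major-arc structure}. On the minor arcs — frequencies $\xi$ all of whose coordinates are far from rationals with small denominator — the Weyl sum $S_K(\xi)$ is $o(1)$ as $K\to\infty$, with a quantitative power-saving bound $|S_K(\xi)|\ll K^{-c_k}$ (Weyl), so these frequencies contribute negligibly to $\sum_\xi|\widehat f(\xi)|^2 S_K(\xi)$ once $K\geq \VE^{-C_k}$ by Parseval. The remaining \emph{major-arc} frequencies form a union of ``generalized Bohr sets'' of bounded complexity: those $\xi$ with $\|q\xi_j\|$ small for all $j$, for some common $q\leq Q=Q(\VE,k)$. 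On this structured set one replaces $S_K(\xi)$ by its main term, which is a product of a complete Gauss-type sum in $q$ and a slowly-varying archimedean factor; the point is that on the major arcs $S_K(\xi)$ is \emph{close to $1$ up to controlled phases}, and a further averaging of $n$ over a progression of common difference $q$ makes the major-arc contribution essentially $|\widehat f|^2$-weighted nonnegative. This forces, for a positive proportion $\gtrsim (Q)^{-k}\gtrsim c(\VE,k)$ of $n\in[0,K]$ lying in a suitable residue class mod the $\lcm$ of the relevant denominators (an explicit Bohr/progression set), the bound $|B\cap(B+\gamma(n))|/M^k>\delta^2-\VE$.

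Concretely I would organize the argument as an $L^2$ argument rather than an iterated density increment: show
\[
\frac{1}{|R|}\sum_{n\in R}\frac{|B\cap(B+\gamma(n))|}{M^k}\;>\;\delta^2-\VE
\]
for a single explicit set $R\subseteq[0,K]$ of size $\geq c(\VE,k)K$ — namely an arithmetic progression (or Bohr set) of common difference $q\leq Q(\VE,k)$ — whence at least one, and by a Markov/pigeonhole step a $c(\VE,k)$-proportion, of $n\in R$ satisfy the desired inequality. The clean way to get the right set $R$ is: decompose $\Lambda$ as $\Lambda_{\mathrm{major}}+\Lambda_{\mathrm{minor}}$, bound $\|\Lambda_{\mathrm{minor}}\|$ by Weyl, and observe that restricting $n$ to the progression $q\Z$ makes each surviving major-arc phase $e(\xi\cdot\gamma(n))$ close to $1$, so $\Lambda_{\mathrm{major}}$ restricted to $R$ is within $\VE$ of the $\xi=0$ contribution $\delta^2$.

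\textbf{Main obstacle.} The genuine difficulty is the \emph{quantitative bookkeeping of the major arcs and the resulting common denominator $q$}: one must ensure that the number of relevant major-arc denominators, and hence $q=\lcm$, is bounded by $\exp(C\VE^{-1})$-type quantities and no worse, since the size of $R$ is $\sim K/q$ and this must match the claimed $c(\VE,k)=\exp\exp(-C\VE^{-1}\log\VE^{-1})$. This requires that only $O(\VE^{-1})$ frequencies $\xi$ (counted with the $\ell^1$ normalization coming from $\sum|\widehat f(\xi)|^2\leq\delta$) carry non-negligible mass and that their denominators are $\leq\exp(C\VE^{-1})$, controlled via a Weyl-type minor-arc threshold $Q$; combining these across the $k$ coordinates and through the Gauss-sum main terms without losing more than a constant power in the exponents is the technical heart of the proof. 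The dependence on $\deg P=k$ enters only through the Weyl constant $c_k$ and is otherwise harmless.
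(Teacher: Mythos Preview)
Your overall framework---Fourier expansion, Weyl minor-arc bounds, major-arc structure, restriction to a progression $q\Z$---matches the paper's starting point, and you correctly locate the quantitative bookkeeping of the common denominator $q$ as a central issue. However, there is a genuine gap in your handling of the major arcs. Restricting $n$ to $q\Z$ only trivializes the \emph{arithmetic} part of the major-arc phase $e(\xi\cdot\gamma(n))$: if $\xi_j=a_j/q+\beta_j$ with $|\beta_j|\leq(\eta^k K^j)^{-1}$, then $(a_j/q)n^j\in\Z$ for $q\mid n$, but the archimedean remainder $\beta_j n^j$ still ranges over an interval of length $\sim\eta^{-k}$ as $n$ ranges over $[0,K]$. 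Hence the restricted Weyl sum $S_{K,q}(\xi)$ is neither close to $1$ nor nonnegative on the \emph{annulus} of frequencies that are close, but not very close, to the rational lattice; its contribution can legitimately be $-\Theta(\VE)M^k$ if $|\widehat{1_B}|^2$ concentrates there. Your ``only $O(\VE^{-1})$ frequencies carry non-negligible mass'' heuristic does not address this, since the problem is not the number of large Fourier coefficients but where they sit relative to this annulus. (Relatedly, your sketch of the numerics yields $c(\VE,k)\sim Q^{-k}$ with $Q\leq\exp(C\VE^{-1})$, i.e.\ a single exponential, which does not match the claimed doubly-exponential bound---another sign that a mechanism is missing.)

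The paper closes this gap with two ideas absent from your proposal. First, a three-term decomposition $f=f_1+f_2+f_3$ with $f_1$ carrying the inner major arcs (archimedean scale $L_1=\eta^{-k}\lambda$, giving the main term $\geq(\delta^2-\VE/2)M^k$), $f_2$ the minor arcs, and $f_3$ the annulus $\Omega=\Omega_{\eta_\VE,\lambda,\lambda}$. Making $f_2$ genuinely small \emph{after} restricting to $q_\eta\Z$ requires an energy-increment selection of $\eta$ along a lacunary sequence of length $O(\VE^{-1})$ (one pigeonholes for a scale at which $\widehat\psi_{q_{j+1},L_2}-\widehat\psi_{q_j,L_2}$ is small); this, not a large-spectrum count, is what forces $\eta_\VE=\exp(-C\VE^{-1}\log\VE^{-1})$ and hence the double exponential via $q_{\eta_\VE}\leq\exp(C\eta_\VE^{-k})$. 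Second---and this is the step with no analogue in your plan---the paper does not try to control $f_3$ at a single scale. Instead it runs the dichotomy at $J>10/\VE$ well-separated dyadic scales $\lambda_1,\dots,\lambda_J\leq K$, chosen so that the annuli $\Omega_j$ are pairwise \emph{disjoint}; if the recurrence estimate failed on every $(\lambda_j,2\lambda_j]$, each $\Omega_j$ would carry at least $\VE M^k/10$ of $\int_{\T^k}|\widehat{1_B}|^2$, contradicting Plancherel. It is this multi-scale pigeonhole on Fourier mass, not a single good progression $R$, that delivers the theorem.
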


\begin{thm}\label{infinite2}
Let $B\subseteq\N^k$, $\gamma(n)=(n,n^2,\dots,n^k)$ and $\VE>0$, then there exists $K=K(\VE,k,B)$ such that 
\bee
\left|\left\{n\in I \,:\,\D(B\cap(B+\gamma(n)))>\D(B)^2-\VE\right\}\right|
\geq c(\VE,k)|I|
\eee
for all intervals $I$ of natural numbers with $|I|\geq K$ and $c(\VE,k)=\exp\exp(-C\VE^{-1}\log\VE^{-1})$.
\end{thm}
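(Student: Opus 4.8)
The plan is to deduce Theorem~\ref{infinite2} from the finite statement Theorem~\ref{finite2} by a standard density-increment/compactness argument adapted to the syndeticity setting. Fix $B\subseteq\N^k$ with $\D(B)=\delta$ and fix $\VE>0$. First I would choose, along a sequence $N_j\to\infty$ realizing the upper density, finite portions $B_j=B\cap[1,N_j]^k$ with $|B_j|/N_j^k\to\delta$; more precisely, given any $\VE'<\VE$ (say $\VE'=\VE/2$) there is $N_0$ so that for all $N\ge N_0$ one has $|B\cap[1,N]^k|\le(\delta+\eta)N^k$ for a small $\eta=\eta(\VE)$, and infinitely many $N$ with $|B\cap[1,N]^k|\ge(\delta-\eta)N^k$. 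The point of introducing $\eta$ is that $(\delta\pm\eta)^2$ is within $O(\eta)$ of $\delta^2$, so an $\VE'$-optimal return time for a finite box of density close to $\delta$ is automatically an $\VE$-optimal return time for $B$ itself.

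The key step is the following localization: given an interval $I\subseteq\N$ with $|I|\ge K$ (the value of $K$ to be chosen), I want to produce $\ge c(\VE,k)|I|$ elements $n\in I$ with $\D(B\cap(B+\gamma(n)))>\delta^2-\VE$. Apply Theorem~\ref{finite2} not to the interval $[0,K]$ at the origin but to a translated/dilated copy matched to $I$: for a large scale $M$ with $M^{1/k}\ge |I|+\max I$ (which forces $M$ large, hence $K(\VE,k,B)$ will depend on $I$ only through requiring $M$ large enough that the box $[1,M]^k$ has density within $\eta$ of $\delta$), set $B'=B\cap[1,M]^k$ and run Theorem~\ref{finite2} with parameter $K'$ chosen so that $[0,K']$ contains a translate of $I$. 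Theorem~\ref{finite2} gives $\ge c(\VE/2,k)K'$ values $n\in[0,K']$ with $|B'\cap(B'+\gamma(n))|/M^k>(\delta-\eta)^2-\VE/2>\delta^2-\VE$; since $B'\subseteq B$, the lower bound persists with $B'$ replaced by $B$ and $M^{-k}$ replaced by the density, so $\D(B\cap(B+\gamma(n)))>\delta^2-\VE$ for all such $n$. The only remaining issue is that these good $n$'s lie in $[0,K']$, not in the prescribed interval $I$; this is handled by a pigeonhole/averaging over translates — cover $[0,K']$ by $\lfloor K'/|I|\rfloor$ disjoint shifts of $I$ (up to boundary), and since the good set has density $c(\VE/2,k)$ in $[0,K']$, at least one shift contains $\ge c(\VE/2,k)|I|$ good points; but we need \emph{every} shift, i.e.\ the particular $I$ we were handed, to be good, which is exactly where syndeticity/translation-invariance of the underlying box argument must be invoked rather than a single application.

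To get the statement for the specific interval $I$, I would instead apply Theorem~\ref{finite2} directly on a box whose coordinates are shifted so that the relevant polynomial orbit over $n\in I$ sits inside $[1,M]^k$: replace $B$ by the set $B''=\{x\in[1,M]^k: x+\gamma(a)\in B\}$ where $a=\min I$ and $I=[a,a+|I|-1]$, noting $\D(B'')=\D(B)=\delta$ and $B\cap(B+\gamma(n))\supseteq (B''\cap(B''+\gamma(n-a)))+\gamma(a)$ up to the usual density-preserving bookkeeping, reducing to the case $a=0$ on a box of size $M\ge(|I|)^k$. Then Theorem~\ref{finite2} applied with $K=|I|-1\le M^{1/k}$ yields the required $\ge c(\VE/2,k)|I|\ge c(\VE,k)|I|$ good $n\in[0,|I|-1]$, which translate back to good $n\in I$. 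The threshold $K(\VE,k,B)$ is then dictated by how large $M$ must be taken for $|B\cap[1,M]^k|$ to be within $\eta N^k$ of $\delta M^k$ (both above and below), which genuinely depends on $B$ and not merely on $\delta$ — consistent with the remark in the paper. The main obstacle is the bookkeeping in this last reduction: making the inclusion $B\cap(B+\gamma(n))\supseteq\cdots$ and the density comparison precise while keeping the quantitative constant $c(\VE,k)$ exactly of the claimed form $\exp\exp(-C\VE^{-1}\log\VE^{-1})$ (absorbing the factor from $\VE\mapsto\VE/2$, which only changes $C$), and ensuring that passing from $B''\cap[1,M]^k$ back to $B$ does not lose more than $\eta$ in density on the relevant scale. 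Everything else is a routine compactness/limiting argument.
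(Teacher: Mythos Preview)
There is a genuine gap in your reduction, precisely at the point you flag as ``the usual density-preserving bookkeeping'': the translation trick $B''=\{x:x+\gamma(a)\in B\}$ does \emph{not} reduce the interval $I=[a,a+|I|-1]$ to $[0,|I|-1]$, because the curve $\gamma(n)=(n,n^2,\dots,n^k)$ is not a homomorphism. Concretely, if $y\in B''\cap(B''+\gamma(n-a))$ then $y+\gamma(a)\in B$ and $y-\gamma(n-a)+\gamma(a)\in B$, so $z=y+\gamma(a)$ lies in $B\cap(B+\gamma(n-a))$, not in $B\cap(B+\gamma(n))$; the claimed inclusion would require $\gamma(n)=\gamma(a)+\gamma(n-a)$, which fails for $k\ge 2$. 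The correct relation is $\gamma(a+s)=\gamma(a)+T_a^*\gamma(s)$ with $T_a$ the unipotent matrix $(T_a)_{ij}=\binom{j}{i}a^{j-i}$, so any reduction by shift forces you to replace $B$ by $(T_a^*)^{-1}(B-\gamma(a))$, a set living in a highly sheared parallelepiped rather than a cube, to which Theorem~\ref{finite2} no longer applies.

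This is exactly why the paper does \emph{not} deduce Theorem~\ref{infinite2} from Theorem~\ref{finite2}. Instead it proves a second dichotomy (Proposition~\ref{dichotomy2}) valid on an arbitrary interval $(\lm,\lm+\mu]$: either that interval already contains $\exp(-C\eta_\VE^{-k})\mu$ good return times, or $\widehat{1_B}$ carries $\gg\VE M^k$ mass on the sheared annulus $T_\lm^{-1}\Omega_{\eta_\VE,\lm,\mu}$. The shear by $T_\lm$ is unavoidable and is precisely the obstruction you ran into. The proof then assumes failure on infinitely many intervals $I_j=(\lm_j,\lm_j+\mu_j]$, passes to a lacunary subsequence, and uses the Overlapping Lemma (Lemma~\ref{cor}) to show that any $\A\in\T^k$ lies in at most $k$ of the sheared annuli $T_{\lm_j}^{-1}\Omega_j$; summing the Fourier-mass lower bounds then contradicts Plancherel. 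Your pigeonhole remark is on target---one application of the finite theorem cannot pin down a prescribed interval---but the fix is not a translation of $B$; it is this separate dichotomy for shifted intervals together with the overlap control.
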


Recall that for $B\subseteq\N^k$ the upper density of $B$ is defined to be $\D(B)=\limsup_{M\rightarrow\infty}|B\cap[1,M]^k|/M^k.$

\subsection{An outline of the paper}\label{outline}
The bulk of the present paper is concerned with establishing Theorems \ref{finite2} and \ref{infinite2}, from which Theorems \ref{finite1} and \ref{infinite1} follow in an essentially straightforward manner. These deductions are presented in Sections \ref{finite} and \ref{infinite} respectively.

In Section \ref{pre} we recall some basic properties of the Fourier transform on $\Z^k$, see how these can be used to count differences in $B\subseteq[1,M]^k$ of the form $\gamma(t)$ and lead us naturally to the analysis of certain variants of standard Weyl sums. 

%The proofs of Theorems \ref{finite2} and \ref{infinite2} are presented in Sections \ref{red}-\ref{5}. 
In Section \ref{red} we reduce the task of proving Theorems \ref{finite2} and \ref{infinite2} to a key dichotomy proposition, namely Proposition \ref{dichotomy2}. The proofs that Proposition \ref{dichotomy2} implies Theorems \ref{finite2} and \ref{infinite2} are presented in Sections \ref{prooffinite} and \ref{proofinfinite} respectively. The arguments in these sections are close in spirit, and very much influenced by, those of Bourgain \cite{Bourgain}, see also Magyar \cite{Magyar}. 

In order to use Fourier analysis to prove Proposition \ref{dichotomy2} we introduce a smooth functional variant of  Proposition \ref{dichotomy2}, namely Proposition \ref{smoothdichotomy}. The proof of Proposition \ref{smoothdichotomy} as well as the proof that it implies Proposition \ref{dichotomy2} is given in Sections \ref{4} and \ref{5}.

Finally, in Section \ref{A}, we include a short proof of the fact that the parameter $L$ in Theorem \ref{infinite1} necessarily depends on the actual set $A$ in question and not just its density.

%\newpage

\section{Preliminaries}\label{pre}

\subsection{Fourier analysis on $\Z^k$}
If $f:\Z^k\rightarrow\C$ is a function for which $\sum_{m\in\Z^k}|f(m)|<\infty$
we will say that $f\in L^1(\Z^k)$ and define 
\[\|f\|_1=\sum_{m\in\Z^k}|f(m)|.\]
For $f\in L^1$ we define its \emph{Fourier transform} $\widehat{f}:\T^k\rightarrow\C$ by
\[\widehat{f}(\A)= \sum\limits_{m\in\Z^k}f(m)e^{-2\pi i m\cdot\A}.\]
Note that the summability assumption on $f$ ensures that the series defining $\widehat{f}$ converges uniformly to a continuous function on $\T^k$ (which we will identify with the unit cube $[0,1]^k$ in $\R^k$) and that the Fourier inversion formula and Plancherel's identity, namely
\[f(m)=\int_{\T^k}\widehat{f}(\A)e^{2\pi i m\cdot\A}d\A\quad\quad\text{and}\quad\quad\int_{\T^k}|\widehat{f}(\A)|^2 d\A=\sum_{m\in\Z^k}|f(m)|^2\]
are, in this setting, immediate consequences of the familiar orthogonality relation
\[\int_{\T^k}e^{2\pi i m\cdot\A}d\A=\begin{cases}
1\quad\text{if \ $m=0$}\\ 
0\quad\text{if \ $m\ne0$}
\end{cases}.\]

Defining the convolution of $f$ and $g$ to be
\[f*g(m)=\sum_{\ell\in\Z^k}f(m-\ell)g(\ell)\]
it follows that if $f,g\in L^1$ then $f*g\in L^1$ with
\[\|f*g\|_1\leq\|f\|_1\|g\|_1\quad\quad\text{and}\quad\quad\widehat{f*g}=\widehat{f}\,\widehat{g}.\]

Finally, we remark that it follows from the Poisson Summation Formula that if $\vp\in\mathcal{S}(\R^k)$, then
\be
\widehat{\vp}(\A)=\sum_{\ell\in\Z^k}\widetilde{\vp}(\A-\ell)
\ee
where
\be\label{FTonRk}
\widetilde{\vp}(\xi)=\int_{\R^k}\vp(x)e^{-2\pi i x\cdot\xi}\,dx
\ee
denotes the Fourier transform (on $\R^k$) of $\vp$.

\subsection{Counting differences of the form $\gamma(n)$% and Weyl sum estimates
}

Let $B\subseteq[1,M]^k$ and $\D=|B|/M^k$. 

Let $1\leq\mu\leq\lm$ be integers with $\lm^k\leq M/2^k$.
It is easy to verify, using the properties of the Fourier transform discussed above, that the average number of pairs of elements in $B$ whose difference is equal to $\gamma(n)$ with
$n\in(\lm,\lm+\mu]\cap\Z$ can be expressed as follows:
\be\frac{1}{\mu}\sum_{n=\lm+1}^{\lm+\mu}|B\cap(B+\gamma(n))|=\frac{1}{\mu}\sum_{n=\lm+1}^{\lm+\mu}\sum_{m\in\Z^k}1_B(m)1_B(m-\gamma(n))=\int_{\T^k}|\widehat{1_B}(\A)|^2 S_{\lm,\mu}(\A)\,d\A\ee
where
\be\label{weyl sum 2}
S_{\lm,\mu}(\A)=\frac{1}{\mu}\sum_{n=\lm+1}^{\lm+\mu} e^{2\pi i \A\cdot\gamma(n)}.%=e^{2\pi i \A\cdot\gamma(\lm)}S_\mu(T_\lm\A)
\ee
%\[S_{\lm,\mu}(\A)=e^{2\pi i \A\cdot\gamma(\lm)}S_\mu(T_\lm\A)\]
It is easy to see that
\be\label{weyl relate 1}
S_{\lm,\mu}(\A)=\frac{\lm+\mu}{\mu}\,S_{\lm+\mu}(\A)-\frac{\lm}{\mu}\,S_\lm(\A)
\ee
where
\be\label{weyl sum}
S_\mu(\A)=\frac{1}{\mu}\sum_{n=1}^{\mu} e^{2\pi i \A\cdot\gamma(n)}
\ee
denotes a classical (normalized) Weyl sum.
Unfortunately, the rather simplistic relationship indicated in (\ref{weyl relate 1}) will only be useful to us in the case where $\mu=\lm$. When $\mu<\lm$ we will make use of the following alternative:
\be\label{weyl relate 2}
S_{\lm,\mu}(\A)=e^{2\pi i \A\cdot\gamma(\lm)}S_\mu(T_\lm\A)
\ee
where $T_\lm$ is a $k\times k$ matrix whose entries are given by
 \be
(T_{\lm })_{ij}=\begin{cases}
\binom{j}{i}\lm^{j-i}&\quad j\geq i\\
0 &\quad j<i
\end{cases}.
\ee

\subsection{Standard Weyl sum estimates}

It is clear that whenever $|\A_j|\ll \mu^{-j}$ there can be no cancellation in the Weyl sum (\ref{weyl sum}), in fact it is easy to verify that the same is also true whenever each $\A_j$ is \emph{close} to a rational with \emph{small} denominator (there is no cancellation over sums in residue classes modulo $q$). 

We now state a precise formulation of the well known fact that this is indeed the only obstruction to cancellation. %We remark that Lemma \ref{Weyl Estimates} is true with hypotheses that are weaker than those which have chosen below, namely with $q$ in place of $q^j$ in (\ref{q^2}), and usually stated in this form. 
For a proof of this result see either \cite{LM1} or \cite{LM1'}.

\begin{lem}\label{Weyl Estimates}%[Standard Weyl Sum Estimates] 
Let $\eta>0$ and  $\mu\geq \eta^{-C}$ (with $C$ sufficiently large depending on $k$). If for some $1\leq j\leq k$ we have
\be%\label{q^2}
\left|\A_j-\frac{a}{q}\right|>\frac{1}{\eta^k\mu^j}
\ee 
for all $a\in\Z$ and $1\leq q\leq \eta^{-k}$,
then
\be\label{minorest}
%\left|\sum_{d=1}^{\mu}e^{2\pi i \A\cdot\gamma(d)}\right|\leq C\eta\mu.
\left|S_{\mu}(\A)\right|\leq C_1\eta.
\ee
\end{lem}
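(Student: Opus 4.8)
The plan is to follow the standard Weyl differencing argument, tracking the quantitative dependence on $\eta$ carefully. First I would square the Weyl sum $k-1$ times to reduce to an exponential sum over a linear phase. Precisely, writing $g(n)=\A\cdot\gamma(n)=\A_1 n+\dots+\A_k n^k$, one applies the Weyl differencing identity
\[
|S_\mu(\A)|^{2^{k-1}}\leq \mu^{-1}\sum_{|h_1|<\mu}\cdots\sum_{|h_{k-1}|<\mu}\Bigl|\frac{1}{\mu}\sum_{n\in I(\mathbf h)}e^{2\pi i k!\,\A_k h_1\cdots h_{k-1} n}\Bigr|+\text{(error)},
\]
where $I(\mathbf h)$ is a subinterval of $[1,\mu]$ depending on the $h_i$. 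The inner sum is a geometric-type sum bounded by $\min(1,\mu^{-1}\|k!\,\A_k h_1\cdots h_{k-1}\|^{-1})$, where $\|\cdot\|$ is distance to the nearest integer.

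Next I would run the standard counting argument: the number of tuples $(h_1,\dots,h_{k-1})$ with $|h_i|<\mu$ for which $\|k!\,\A_k h_1\cdots h_{k-1}\|\leq \delta$ is controlled, via a divisor-bound estimate, unless $\A_k$ is well-approximable. Assuming the conclusion fails, i.e. $|S_\mu(\A)|>C_1\eta$, one deduces that there are $\gg \eta^{2^{k-1}}\mu^{k-1}$ such tuples (after absorbing the error term, which is legitimate since $\mu\geq\eta^{-C}$ with $C$ large). Running Dirichlet/pigeonhole on the multiples $k!\,\A_k m$ for $m\leq \mu^{k-1}$, this forces the existence of $a\in\Z$, $1\leq q\ll \eta^{-O(2^{k-1})}$ with $|\A_k-a/q|\ll q^{-1}\mu^{-k}\eta^{-O(1)}$; choosing $C$ in the hypothesis $\mu\geq\eta^{-C}$ and the exponent $k$ in the statement large enough (the statement's $\eta^{-k}$ and $1/(\eta^k\mu^j)$ should be read as "$\eta^{-O_k(1)}$") closes the case $j=k$.

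To handle general $j$, rather than isolating the top coefficient I would, as is standard, first note that if $|S_\mu(\A)|>C_1\eta$ then by the $j=k$ analysis one gets a rational approximation to $\A_k$; subtracting off the corresponding main term (a complete sum over residues mod $q$ times a lower-degree Weyl sum) and iterating downward in the degree gives simultaneous rational approximations $|\A_j-a_j/q_j|\leq \eta^{-k}\mu^{-j}$ with $q_j\leq\eta^{-k}$ for \emph{every} $1\leq j\leq k$. Taking the contrapositive yields exactly the stated dichotomy. The main obstacle is purely bookkeeping: making sure that the polynomial losses in $\eta$ incurred at each of the $k$ differencing/iteration steps (divisor bounds, pigeonholing, clearing denominators) all remain of the form $\eta^{-O_k(1)}$, so that they can be uniformly absorbed into a single exponent depending only on $k$, and that the hypothesis $\mu\geq\eta^{-C}$ with $C=C(k)$ is strong enough to dominate every error term that appears. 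Since this is a known result I would simply cite \cite{LM1} or \cite{LM1'} for the details once the strategy is in place.
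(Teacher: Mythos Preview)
Your sketch follows the standard Weyl--Vinogradov differencing route (reduce to the top coefficient, extract a rational approximation via divisor/pigeonhole counting, then iterate downward in degree), which is indeed the method behind this estimate. Note, however, that the paper itself does \emph{not} prove this lemma at all: immediately after the statement it simply writes ``For a proof of this result see either \cite{LM1} or \cite{LM1'}'', which are precisely the references you end up citing. So there is nothing to compare---your proposal is consistent with, and strictly more detailed than, what the paper provides.
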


\begin{Rem}
It is easy to see that one can conclude from Lemma \ref{Weyl Estimates} that estimate (\ref{minorest}) also holds (under the same hypotheses as above with $C_1$ replaced with say $2C_1$) for the ``perturbed'' Weyl sums
\[\frac{1}{\mu}\sum_{n\in(\lm,\lm+\mu]\cap\Z} e^{2\pi i \A\cdot \gamma(n)}\]
where $1\leq\mu\leq\lm$ are now no longer assumed to take on integer values, provided $\mu\gg\eta^{-1}$.
\end{Rem}

We note that from Lemma \ref{Weyl Estimates}, relationship (\ref{weyl relate 2}) and the Plancherel identity, we may conclude that %the main contribution to an integral of the form 
\[\int_{\T^k}|\widehat{1_B}(\A)|^2 S_{\lm,\mu}(\A)\,d\A=\int_{T_\lm^{-1}\mathfrak{M}_{\eta,\mu}} |\widehat{1_B}(\A)|^2 S_{\lm,\mu}(\A)\,d\A+O(\eta M^k)\]
where \[\mathfrak{M}_{\eta,\mu}=\bigcup_{q=1}^{\eta^{-k}}\left\{\A\in\T^k\,:\, \left|\A_j-\frac{a_j}{q}\right|\leq \frac{1}{\eta^k\mu^j}  \ \text{($1\leq j\leq k$) for some $a\in\mathbb{Z}^k$}\right\}.\]
While in the case $\mu=\lm$ %(actually all we need is $\mu\geq c\lm$) 
it follows from (\ref{weyl relate 1}) that 
\[|S_{\lm,\lm}(\A)|\leq 3C_1\eta\]
whenever $\A\notin\mathfrak{M}_{\eta,\lm}$ and as a consequence of this
we can in fact make the rather more favorable conclusion that
\[\int_{\T^k}|\widehat{1_B}(\A)|^2 S_{\lm,\lm}(\A)\,d\A=\int_{\mathfrak{M}_{\eta,\lm}} |\widehat{1_B}(\A)|^2 S_{\lm,\lm}(\A)\,d\A+O(\eta M^k).\]

%the main contribution to an integral of the form 

In order to carry out our Fourier analytic arguments it will be convenient to consider the (nonisotropic) lattice 
\[\left\{\left(\frac{a_1}{q_\eta},\frac{a_2}{q^2_\eta},\dots,\frac{a_k}{q_\eta^k}\right)\in\T^k\,:\, (a_1,\dots,a_k)\in\Z^k\right\}\]
of rational points where 
\be
q_\eta=\lcm\{1\leq q \leq \eta^{-k}\}
\ee 
as opposed to the much smaller, but alas more wildly distributed, set of rational points that appear as the centers of the major boxes in $\T^k$ that constitute $\mathfrak{M}_{\eta,\mu}$.  
Note that it follows from elementary considerations involving the prime numbers that 
\be 
q_\eta\leq \exp(C\eta^{-k})
\ee
and this accounts for one of the exponentials in the bound in Theorems \ref{finite2} and \ref{infinite2} (as well as \ref{finite1} and \ref{infinite1}).

\section{Reduction to dichotomy propositions}\label{red}

We now separately present the statement of two key propositions (although as we shall see the first of which follows immediately from the second)
and demonstrate how they can be used to prove Theorem \ref{finite2} and Theorem \ref{infinite2} respectively.

For $L>1$ and $q\in\N$ we define 
\be\label{box}
M_{q,L}=\left\{\A\in\T^k\,:\, \left|\A_j-\frac{a_j}{q^j}\right|\leq \frac{1}{L^j}  \ \text{($1\leq j\leq k$) for some $a\in\mathbb{Z}^k$}\right\}.
\ee
Let $\eta>0$ and $1\leq\mu\leq\lm$. We define
\be
\Omega_{\eta,\lm,\mu}=\left\{\A\in\T^k\,:\, \A\in M_{q_\eta,\eta^k \mu}\setminus M_{q_\eta,\eta^{-k}\lm}\right\}
%\eta^{-k}Q_\mu\backslash\eta^k Q_\lm+q_\eta^{-1}[1,q_\eta]^k
\ee
where $q_\eta=\lcm\{1\leq q \leq \eta^{-k}\}$ as before.
%\be
%L_1=\eta^{-k}\lm\quad\text{and}\quad L_2=\eta^k \mu.
%\ee

\subsection{Proof of Theorem \ref{finite2}}\label{prooffinite}
Although this result can in fact be deduced from the second dichotomy proposition (Proposition \ref{dichotomy2} below), we feel that the reduction of Theorem \ref{finite2} to the (simpler) Proposition \ref{dichotomy1} is not only more direct and straightforward (by virtue of the fact that we can take $\mu=\lm$), but that our decision to include it will also serve to illuminate the deduction of Theorem \ref{infinite2} from Proposition \ref{dichotomy2}.

\begin{propn}\label{dichotomy1} Let $B\subseteq[1,M]^k$ and $\VE>0$.
Let $\eta_\VE=\exp(-C\VE^{-1}\log\VE^{-1})$ and $q_\VE=q_{\eta_\VE}$.

If $\lm$ is an integer that satisfies
$\lm\geq \eta_\VE^{-k}q_\VE$ and $M\geq C(\eta_\VE^{-k}\lm)^k$
then either
\be\label{random1}
\left|\left\{n\in(\lm,2\lm]\cap\Z\,:\,\frac{|B\cap(B+\gamma(n))|}{M^k}> \left(\dfrac{|B|}{M^k}\right)^2-\VE \right\}\right|\geq \exp(-C\eta_\VE^{-k})\lm
%|B\cap(B+\gamma(d))|>(\D^2-\VE)M^k \ \  \text{for some \ $d\in (\lm,\lm+\mu]\cap\Z$}
\ee
or
\be\label{roughint1}
\int_{\Omega} |\widehat{1_B}(\A)|^2\,d\A\geq\VE M^k/10
\ee
where $\Omega=\Omega_{\eta_\VE,\lm,\lm}$.
\end{propn}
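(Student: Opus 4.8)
The plan is to derive the dichotomy from the Fourier-analytic expansion of the average count, using the standard Weyl sum estimates of Lemma~\ref{Weyl Estimates} together with the major arc decomposition. Set $\eta=\eta_\VE$, $q=q_\VE$ and $\mu=\lm$, so we are counting over $n\in(\lm,2\lm]$. First I would write, as in the preliminary section,
\be
\frac{1}{\lm}\sum_{n=\lm+1}^{2\lm}\frac{|B\cap(B+\gamma(n))|}{M^k}=\frac{1}{M^k}\int_{\T^k}|\widehat{1_B}(\A)|^2\,S_{\lm,\lm}(\A)\,d\A,
\eee
and split the torus according to the major box structure. By relationship (\ref{weyl relate 1}) and Lemma~\ref{Weyl Estimates}, $|S_{\lm,\lm}(\A)|\leq 3C_1\eta$ off $\mathfrak{M}_{\eta,\lm}$, so the integral over the complement of $\mathfrak{M}_{\eta,\lm}$ contributes at most $O(\eta M^k)$, which we arrange to be $\leq \VE M^k/100$ by the choice $\eta_\VE=\exp(-C\VE^{-1}\log\VE^{-1})$ (the $\log$ factor absorbs the polynomial-in-$\VE$ losses). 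Thus the whole average is, up to $\VE M^k/100$, captured by the part of $|\widehat{1_B}|^2$ supported on the major arcs.

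Next I would compare $\mathfrak{M}_{\eta,\lm}$ with the cleaner lattice-based box $M_{q_\eta,\eta^{-k}\lm}$ built on the single denominator $q_\eta=\lcm\{1\le q\le\eta^{-k}\}$: since every $a/q$ with $q\le\eta^{-k}$ is of the form $a'/q_\eta$, one has $\mathfrak{M}_{\eta,\lm}\subseteq M_{q_\eta,\eta^{-k}\lm}$ (after adjusting constants), so
\be
\int_{\T^k}|\widehat{1_B}(\A)|^2 S_{\lm,\lm}(\A)\,d\A=\int_{M_{q_\eta,\eta^{-k}\lm}}|\widehat{1_B}(\A)|^2 S_{\lm,\lm}(\A)\,d\A+O(\eta M^k).
\eee
Now split $M_{q_\eta,\eta^{-k}\lm}$ into the ``very central'' part $M_{q_\eta,\eta^k\lm}$ (tiny boxes of side $\sim(\eta^k\lm)^{-j}$) and the annular region $\Omega=\Omega_{\eta_\VE,\lm,\lm}=M_{q_\eta,\eta^k\lm}\setminus M_{q_\eta,\eta^{-k}\lm}$. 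If the $\Omega$-integral is large, namely $\int_\Omega|\widehat{1_B}|^2\geq \VE M^k/10$, we are in case (\ref{roughint1}) and done. Otherwise that piece contributes at most $\VE M^k/10$ in absolute value (using $|S_{\lm,\lm}|\leq 1$), so the average count is at least
\be
\frac{1}{M^k}\int_{M_{q_\eta,\eta^k\lm}}|\widehat{1_B}(\A)|^2 S_{\lm,\lm}(\A)\,d\A-\frac{\VE}{5}.
\eee
On the innermost boxes $M_{q_\eta,\eta^k\lm}$ the phases $e^{2\pi i\A\cdot\gamma(n)}$ vary by $o(1)$ over the whole range $n\in(\lm,2\lm]$ once $\lm\geq \eta^{-k}q_\eta$ (this is exactly why that lower bound on $\lm$ is imposed), so $S_{\lm,\lm}(\A)=1+o(1)$ there; hence that integral is $(1-o(1))\int_{M_{q_\eta,\eta^k\lm}}|\widehat{1_B}|^2$. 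Finally, the full-torus integral of $|\widehat{1_B}|^2$ against the trivial ``sum'' (i.e.\ the $n=0$ contribution, or a Plancherel/positivity argument) gives the main term: by a Cauchy--Schwarz / $B_1$-convolution argument one shows $\int_{M_{q_\eta,\eta^k\lm}}|\widehat{1_B}|^2\geq |B|^2/M^k - O(\eta M^k)$, i.e.\ removing the non-central major arcs and the minor arcs only costs $O(\eta M^k)+\VE M^k/10$. Assembling these estimates, the average value of $|B\cap(B+\gamma(n))|/M^k$ over $n\in(\lm,2\lm]$ exceeds $(|B|/M^k)^2-\VE/2$, whence a counting (Markov) argument forces at least $\exp(-C\eta_\VE^{-k})\lm$ values of $n$ in that range to satisfy the pointwise inequality in (\ref{random1}); the factor $\exp(-C\eta_\VE^{-k})$ enters precisely because, near a bad $n$, the count could dip, but the density of such a dip is controlled by $1/q_\eta\geq\exp(-C\eta_\VE^{-k})$.

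The main obstacle, I expect, is the step showing that restricting $|\widehat{1_B}|^2$ to the innermost boxes $M_{q_\eta,\eta^k\lm}$ recovers the main term $|B|^2/M^k$ up to $O(\eta M^k)$ — equivalently, that the mass of $\widehat{1_B}$ not near rationals with denominator dividing $q_\eta$ is small in the relevant weighted sense. This is not literally Plancherel; it requires the observation that $1_B$ restricted to residue classes mod $q_\eta$ behaves like its average, which in turn uses that the boxes have side $(\eta^k\lm)^{-j}$ large enough to ``see'' the full cube $[1,M]^k$ (this is where $M\geq C(\eta^{-k}\lm)^k$ is used) combined with a genuine argument about the Fourier transform of $1_B$ near the $q_\eta$-rational lattice — presumably the content deferred to the smooth functional Proposition~\ref{smoothdichotomy}. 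A secondary technical point is keeping all the nested constants ($C$ in $\eta_\VE$, the implied constants in the box inclusions, and the $C$ in the lower bound $M\geq C(\eta^{-k}\lm)^k$) consistent so that the various error terms are each genuinely $\leq\VE M^k/10$.
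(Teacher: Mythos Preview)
Your overall architecture (minor arcs, annulus $\Omega$, inner box) matches the paper's, but there is a genuine gap at the step where you assert $S_{\lm,\lm}(\A)=1+o(1)$ on the innermost box $M_{q_\eta,\eta^k\lm}$. That box is a union of tiny neighbourhoods of \emph{all} the lattice points $(a_1/q_\eta,\dots,a_k/q_\eta^k)$, not just of the origin. Near a nonzero lattice point the phase $\A\cdot\gamma(n)$ is close to $\sum_j a_j n^j/q_\eta^j$, which depends on $n \bmod q_\eta$; hence $S_{\lm,\lm}(\A)$ there is essentially a complete exponential sum modulo $q_\eta$ and need not be near $1$ at all. A concrete counterexample to the conclusion you draw: if $B$ lies in a single residue class modulo some $d\le\eta^{-k}$ in the first coordinate, then $B\cap(B+\gamma(n))=\emptyset$ for every $n$ not divisible by $d$, so the full average over $n\in(\lm,2\lm]$ is at most $\D^2/d$, not $\D^2-\VE/2$.

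The paper resolves exactly this by passing to the restricted count $\Lambda_{q,\mu}$ that averages only over $n$ divisible by $q=q_\eta$ (so the exponential sum genuinely is $1+o(1)$ near every lattice point), and then recovering the main term via the smooth decomposition $f=f_1+f_2+f_3$: the lower bound $\Lambda_{q,\mu}(f_1,f_1)\ge(\D^2-\VE/2)M^k$ is Lemma~\ref{lemma1} (a Cauchy--Schwarz argument on the convolved function, not a Fourier-side bound on $\int_{M_{q_\eta,\eta^k\lm}}|\widehat{1_B}|^2$), and the error term is handled by a pigeonhole choice of $\eta$ from a lacunary sequence (Lemma~\ref{lemma3}), which is what produces the $\VE^{-1}\log\VE^{-1}$ in $\eta_\VE$. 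The factor $\exp(-C\eta_\VE^{-k})$ in (\ref{random1}) then arises transparently as $c\VE/q_\VE$, since one is only averaging over multiples of $q$; your Markov explanation of this factor does not follow from the unrestricted average you compute.
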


Proposition \ref{dichotomy1} (and Proposition \ref{dichotomy2} below) both express, in our setting, the basic dichotomy that either $B$ behaves as though it were a random set, or has arithmetic structure as the Fourier transform $\widehat{1_B}$ is concentrated (on small annuli) around a fixed (nonisotropic) lattice of rational points.

\begin{proof}[Proof of Theorem \ref{finite2}]
Let $\VE>0$, $\eta_\VE=\exp(-C\VE^{-1}\log\VE^{-1})$ and $q_\VE=q_{\eta_\VE}$. 
Suppose $K$ and $M$ are integers that satisfy \[\exp(C\eta_\VE^{-k})\leq K\leq M^{1/k}\]
and $\{\lm_j\}_{j=1}^J$ is a sequence of integers with $J>10/\VE$ with the property that $\lm_1\geq \eta_\VE^{-k} q_\VE$, $\lm_J=c\eta_\VE^{k}K$ and 
\be\label{disjoint1}
\eta_\VE^{-2k}\lm_j\leq  \lm_{j+1}\leq C\eta_\VE^{-2k}\lm_j
\ee
for $1\leq j\leq J$. 
It is easy to now see that the sets
$\Omega_j=\Omega_{\eta_\VE,\lm_j,\lm_j}$
are disjoint.

Suppose, contrary to Theorem \ref{finite2}, that there does exists a set $B\subseteq[1,M]^k$ such that
\be\label{negative}
\left|\left\{n\in[0,K]\,:\, \frac{|B\cap(B+\gamma(n))|}{M^k}> \left(\dfrac{|B|}{M^k}\right)^2-\VE\right\}\right|<\exp(-C\eta_\VE^{-k})K\ee
for all $C>0$. Since $K/\lm_j\leq C\eta_\VE^{-2kJ}\ll \exp(-C\eta_\VE^{-k})$ for all $1\leq j\leq J$, it follows that
\be\label{dyadic}
\left|\left\{n\in(\lm_j,2\lm_j]\cap\Z\,:\, \frac{|B\cap(B+\gamma(n))|}{M^k}> \left(\dfrac{|B|}{M^k}\right)^2-\VE\right\}\right|<\exp(-C\eta_\VE^{-k})\lm_j\ee
for all $C>0$ and all $1\leq j\leq J$. 

Proposition \ref{dichotomy1} allows us to conclude from this that
\be
\sum_{j=1}^J\int_{\Omega_j} |\widehat{1_B}(\A)|^2\,d\A\geq J\VE M^k/10>M^k.
\ee

On the other hand it follows from the disjointness property of the sets $\Omega_j$ (which we guaranteed by our initial choice of sequence $\{\lm_j\}$) and the Plancherel identity that
\be
\sum_{j=1}^J\int_{\Omega_j} |\widehat{1_B}(\A)|^2\,d\A\leq \int_{\T^k}|\widehat{1_B}(\A)|^2\,d\A\leq |B|\leq M^k
\ee
giving us our desired contradiction. 
\end{proof}

\subsection{Proof of Theorem \ref{infinite2}}\label{proofinfinite}
 We now present the statement of our second (stronger) dichotomy proposition.

\begin{propn}\label{dichotomy2} Let $B\subseteq[1,M]^k$ and $\VE>0$.
Let $\eta_\VE=\exp(-C\VE^{-1}\log\VE^{-1})$ and $q_\VE=q_{\eta_\VE}$.

If $1\leq\mu\leq\lm$ are any given pair of integers that satisfy
$\mu\geq \eta_\VE^{-k}q_\VE$ and $M\geq C(\eta_\VE^{-k}\lm)^k$
then either
\be\label{random2}
\left|\left\{n\in(\lm,\lm+\mu]\cap\Z\,:\,\frac{|B\cap(B+\gamma(n))|}{M^k}> \left(\dfrac{|B|}{M^k}\right)^2-\VE \right\}\right|\geq \exp(-C\eta_\VE^{-k})\mu
%|B\cap(B+\gamma(d))|>(\D^2-\VE)M^k \ \  \text{for some \ $d\in (\lm,\lm+\mu]\cap\Z$}
\ee
or
\be\label{roughint2}
\int_{T_{\lm}^{-1}\Omega} |\widehat{1_B}(\A)|^2\,d\A\geq\VE M^k/10
\ee
where $\Omega=\Omega_{\eta_\VE,\lm,\mu}$.
\end{propn}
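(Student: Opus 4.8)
The strategy is the standard ``random versus structured'' dichotomy, realized via the circle method on $\Z^k$. Write $f=1_B$ and $\D=|B|/M^k$, so that by the identity established in Section~\ref{pre},
\be
\frac{1}{\mu}\sum_{n=\lm+1}^{\lm+\mu}|B\cap(B+\gamma(n))|=\int_{\T^k}|\widehat{f}(\A)|^2\,S_{\lm,\mu}(\A)\,d\A.
\ee
The plan is to show that if the conclusion \eqref{random2} fails — i.e.\ the set of good return times in $(\lm,\lm+\mu]$ is smaller than $\exp(-C\eta_\VE^{-k})\mu$ — then the left-hand side above must be close to $\D^2 M^k$, in fact within $\VE M^k/2$ of it (the few ``bad'' $n$ contribute at most $\exp(-C\eta_\VE^{-k})\mu\cdot M^k$, which is negligible against $\mu\cdot\VE M^k$ for an appropriately chosen implied constant). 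So under the negation of \eqref{random2} we get
\be
\int_{\T^k}|\widehat{f}(\A)|^2\,S_{\lm,\mu}(\A)\,d\A\geq \D^2 M^k - \VE M^k/2.
\ee

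Next I would split the integral using the Weyl sum estimates. By relation \eqref{weyl relate 2}, $S_{\lm,\mu}(\A)=e^{2\pi i\A\cdot\gamma(\lm)}S_\mu(T_\lm\A)$, and by Lemma~\ref{Weyl Estimates} (with $\eta=\eta_\VE$; here the hypothesis $\mu\geq\eta_\VE^{-k}q_\VE\geq\eta_\VE^{-C}$ is what licenses its use) we have $|S_\mu(T_\lm\A)|\leq C_1\eta_\VE$ off the major boxes $\mathfrak{M}_{\eta_\VE,\mu}$, hence $|S_{\lm,\mu}(\A)|\leq C_1\eta_\VE$ for $\A\notin T_\lm^{-1}\mathfrak{M}_{\eta_\VE,\mu}$. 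Since $|S_{\lm,\mu}|\leq 1$ everywhere and $\int|\widehat f|^2=|B|\leq M^k$, the contribution of the complement of $T_\lm^{-1}\mathfrak{M}_{\eta_\VE,\mu}$ is $O(\eta_\VE M^k)$, which (by the choice $\eta_\VE=\exp(-C\VE^{-1}\log\VE^{-1})$) is $\leq \VE M^k/100$. On the major boxes near rationals $a/q$ with $q\leq\eta_\VE^{-k}$, every such $q$ divides $q_\VE=\lcm\{1\le q\le\eta_\VE^{-k}\}$, so $a_j/q = (a_j q_\VE^j/q^j)/q_\VE^j$ sits on the nonisotropic lattice with denominators $q_\VE^j$; thus $T_\lm^{-1}\mathfrak{M}_{\eta_\VE,\mu}$ is contained in $T_\lm^{-1}M_{q_\VE,\eta_\VE^k\mu}$ up to enlarging the box radii by a harmless constant. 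This reduces matters to estimating $\int_{T_\lm^{-1}M_{q_\VE,\eta_\VE^k\mu}}|\widehat f|^2 S_{\lm,\mu}$.

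Now I would peel off the piece near $\A=0$, or more precisely the ``innermost'' box $M_{q_\VE,\eta_\VE^{-k}\lm}$. On $T_\lm^{-1}M_{q_\VE,\eta_\VE^{-k}\lm}$ one expects $S_{\lm,\mu}(\A)$ to be close to a product of normalized complete exponential sums in residues mod $q_\VE$ times an oscillatory integral factor that is itself close to $1$ (because on this box the relevant frequencies are $O(\eta_\VE^{-k}/\lm)$ in the $j$-th coordinate after the $T_\lm$ change of variables, and $\mu\leq\lm$); a routine Weyl-differencing/summation-by-parts estimate — exactly the ``perturbed Weyl sum'' remark following Lemma~\ref{Weyl Estimates} — shows $S_{\lm,\mu}(\A)=1+O(\VE)$ there after accounting for the lattice structure, while on $M_{q_\VE,\eta_\VE^k\mu}\setminus M_{q_\VE,\eta_\VE^{-k}\lm}=\Omega_{\eta_\VE,\lm,\mu}$ we only use the trivial bound $|S_{\lm,\mu}|\leq 1$. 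Pulling $\A=0$ out of the inner box and using $\widehat f(0)=|B|=\D M^k$ together with the Fourier expansion of $|B\cap(B+\gamma(n))|$ again (this controls $\int_{M_{q_\VE,\eta_\VE^{-k}\lm}}|\widehat f|^2$ up to a $O(\VE M^k)$ error by comparing with the average over a \emph{short} progression where there is no cancellation), we obtain
\be
\D^2 M^k - \VE M^k/2 \leq \D^2 M^k + O(\VE M^k) + \int_{T_\lm^{-1}\Omega}|\widehat f(\A)|^2\,d\A,
\ee
and collecting constants yields $\int_{T_\lm^{-1}\Omega}|\widehat f|^2\geq \VE M^k/10$, which is \eqref{roughint2}.

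\textbf{The main obstacle.} The genuinely delicate step is the third one: showing that on the inner box $T_\lm^{-1}M_{q_\VE,\eta_\VE^{-k}\lm}$ the shifted Weyl sum $S_{\lm,\mu}(\A)$ is uniformly close (within $O(\VE)$, \emph{not} merely $o(1)$) to the value it would have at the nearest lattice point, with all the quantitative dependence on $\eta_\VE$ and $q_\VE$ tracked explicitly — in particular that the oscillation coming from the shift by $\gamma(\lm)$ and from $\mu$ possibly being much smaller than $\lm$ does not destroy the main term. This is where the nonlinear structure of $\gamma$ and the matrix $T_\lm$ really enters, and it is presumably why the authors introduce the smooth functional variant (Proposition~\ref{smoothdichotomy}) rather than arguing directly: a smooth cutoff makes the stationary-phase/summation-by-parts bookkeeping tractable and keeps the error terms of size $O(\VE)$ uniformly in $M$, $\lm$, $\mu$.
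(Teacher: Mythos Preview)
Your outline has the right architecture, but the inner-box step contains a real gap, not merely a bookkeeping issue that smoothing would fix. First, the direction: the negation of \eqref{random2} says that for all but $\exp(-C\eta_\VE^{-k})\mu$ values of $n$ one has $|B\cap(B+\gamma(n))|\le(\D^2-\VE)M^k$, so the average --- and hence $\int|\widehat f|^2S_{\lm,\mu}$ --- is at most $(\D^2-\VE/2)M^k$, not at least; as written your final chain gives only a trivial lower bound on the $\Omega$-integral. More seriously, the claim that $S_{\lm,\mu}(\A)=1+O(\VE)$ on the inner box is not true. You yourself note that on that box $S_{\lm,\mu}$ is, to leading order, a normalized complete exponential sum in residues --- but such sums are \emph{not} close to $1$ away from the zero class (for $\A$ near a rational $a/q$ with $1<q\le\eta_\VE^{-k}$ one gets a Gauss-type sum of size $O(q^{-1/k})$, with no fixed sign). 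Since $|\widehat{1_B}|^2$ may well carry mass near such nonzero rationals (e.g.\ if $B$ lies in a residue class modulo some small $d$), the inner-box contribution $\int_{\text{inner}}|\widehat f|^2 S_{\lm,\mu}$ need not be bounded below by $(\D^2-O(\VE))M^k$, and the lower bound you need collapses. There is also no way to ``control $\int_{\text{inner}}|\widehat f|^2$ up to $O(\VE M^k)$'' from $\widehat f(0)$ alone.

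The paper supplies two ideas your sketch is missing. First, it restricts the count to $n\equiv 0\pmod q$, working with $\Lambda_{q,\mu}(f,f)$ and $S_{\lm,\mu,q}$; since $(a_1/q,\dots,a_k/q^k)\cdot\gamma(qs)\in\Z$ for all $s$, this exponential sum \emph{is} identically $1$ at the lattice points of level $q$, which is exactly the inner-box behaviour you wanted. Second --- and this is what forces $\eta_\VE=\exp(-C\VE^{-1}\log\VE^{-1})$ --- an energy-increment argument (pigeonhole over a lacunary sequence $\eta_1>\eta_2>\cdots$, Lemma~\ref{lemma3}) locates a scale $\eta$ with $\eta_\VE\le\VE\eta\ll\VE$ at which $\|\widehat\psi_{q_{\eta'},L_2}-\widehat\psi_{q_\eta,L_2}\|_\infty\le\VE/40$; combined with Lemma~\ref{Weyl Estimates 2} this yields $\|(1-\widehat\psi_{q,L_2})S_{\lm,\mu,q}\|_\infty\le\VE/20$. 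The main term $\Lambda_{q,\mu}(f_1,f_1)\ge(\D^2-\VE/2)M^k$ (Lemma~\ref{lemma1}) is then obtained in physical space, from the near-invariance of $f_1=f*\psi_{q,L_1}$ under shifts by $\gamma(n)$ with $q\mid n$, together with Cauchy--Schwarz --- not by isolating $\A=0$ on the Fourier side. The residual $f_3$-terms are bounded by $\int|\widehat f|^2|\widehat\psi_{q,L_2}-\widehat\psi_{q,L_1}|$, which is essentially supported on $T_{\lm}^{-1}\Omega$ and gives \eqref{roughint2}.
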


Key to deducing Theorem \ref{infinite2}
from Proposition \ref{dichotomy2} is the following combinatorial result on the annuli $\Omega_{\eta_\VE,\lm,\mu}$.

\begin{lem}[Overlapping Lemma]\label{cor} 
Let $\eta>0$. Suppose $\{\mu_j\}_{j\in\N}$ and $\{\lm_j\}_{j\in\N}$ be sequences such that $\mu_1\geq \eta^{-k}q_\eta$ and
\be\label{lac}
\mu_j \leq \lm_j\leq \frac{1}{3}\,\eta^{2k}\mu_{j+1}
\ee
for all $j\in\N$, then it follows that
\[\A\in T_{\lm_j}^{-1}\Omega_j\]
for at most $k$ different values of $j$, where $\Omega_j =\Omega_{\eta,\lm_j,\mu_j}$.
\end{lem}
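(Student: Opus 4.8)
The plan is to understand what membership $\A\in T_{\lm_j}^{-1}\Omega_j$ actually says about $\A$, and then show that for two indices $i<j$ the corresponding constraints are incompatible once the gap between the scales is large enough, except that the matrix $T_{\lm_j}$ introduces a $k$-dimensional ambiguity (a "triangular shear") which is why we can only rule out $k+1$ simultaneous memberships rather than $2$. Recall that $\Omega_j=\Omega_{\eta,\lm_j,\mu_j}=M_{q_\eta,\eta^k\mu_j}\setminus M_{q_\eta,\eta^{-k}\lm_j}$, so $\A\in T_{\lm_j}^{-1}\Omega_j$ means: there exists $\beta=T_{\lm_j}\A$ lying in $M_{q_\eta,\eta^k\mu_j}$ but not in $M_{q_\eta,\eta^{-k}\lm_j}$. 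The first condition gives a rational approximation $\beta_\ell \approx a_\ell/q_\eta^\ell$ with error at most $(\eta^k\mu_j)^{-\ell}$; the second says $\beta$ is \emph{not} that well approximated, i.e. $|\beta_\ell - b_\ell/q_\eta^\ell| > (\eta^{-k}\lm_j)^{-\ell}$ for \emph{all} $\ell$ and all integer vectors $b$, in particular $\beta$ is at distance $> (\eta^{-k}\lm_j)^{-k}$ from every point of the lattice $q_\eta^{-1}\Z \times \cdots \times q_\eta^{-k}\Z$ in at least one coordinate.

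The key step is the following: suppose $\A\in T_{\lm_i}^{-1}\Omega_i \cap T_{\lm_j}^{-1}\Omega_j$ with $i<j$. Set $\beta^{(i)}=T_{\lm_i}\A$ and $\beta^{(j)}=T_{\lm_j}\A$. Then $\beta^{(j)} = T_{\lm_j}T_{\lm_i}^{-1}\beta^{(i)} = T_{\lm_j-\lm_i}\,\beta^{(i)}$ using the cocycle identity $T_aT_b = T_{a+b}$ for these matrices (which is exactly the statement that $\gamma(n+a)$ is an integral-triangular transform of $\gamma(n)$, and should be recorded or is implicit from the definition of $T_\lm$). Now $\beta^{(i)}$ lies within $(\eta^k\mu_i)^{-1}$ of a lattice point $p = (a_1/q_\eta,\dots,a_k/q_\eta^k)$ in each coordinate. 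I would split $\beta^{(i)} = p + \delta$ with $|\delta_\ell|\le(\eta^k\mu_i)^{-\ell}\le(\eta^k\mu_i)^{-1}$. Applying $T_{\lm_j-\lm_i}$: the image of the lattice point $p$ under $T_{\lm_j-\lm_i}$ is \emph{again} a point of the same lattice $q_\eta^{-1}\Z\times\cdots\times q_\eta^{-k}\Z$ modulo $1$, because the entries of $T$ are integers and $q_\eta^\ell \mid q_\eta^j$ for $\ell\le j$ — this is the whole point of working with the lcm lattice. The image of the error $\delta$ is controlled: $|(T_{\lm_j-\lm_i}\delta)_\ell| \lesssim \lm_j^{k-\ell}\cdot(\eta^k\mu_i)^{-1}$ for each $\ell$, crudely $\lesssim \lm_j^{k}(\eta^k\mu_i)^{-1}$. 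So $\beta^{(j)}$ is within $O(\lm_j^k(\eta^k\mu_i)^{-1})$ of a lattice point. But membership in $\Omega_j$ forbids $\beta^{(j)}$ from being within $(\eta^{-k}\lm_j)^{-k}=\eta^{k^2}\lm_j^{-k}$ of any lattice point in \emph{every} coordinate. Comparing, a contradiction arises provided $\lm_j^k(\eta^k\mu_i)^{-1} \ll \eta^{k^2}\lm_j^{-k}$, i.e. roughly $\mu_i \gg \eta^{-k^2-k}\lm_j^{2k}$ — which is the wrong direction, so this naive bound is too lossy.

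The fix, and the real content, is that I should \emph{not} estimate all $k$ coordinates uniformly but exploit the graded/nonisotropic structure. The correct comparison is coordinate-by-coordinate with the right powers: in coordinate $\ell$, $|\delta_\ell|\le(\eta^k\mu_i)^{-\ell}$, and $(T_{\lm_j-\lm_i})_{\ell m}$ involves $\lm_j^{m-\ell}$ for $m\ge\ell$, so $|(T_{\lm_j-\lm_i}\delta)_\ell|\lesssim \sum_{m\ge\ell}\lm_j^{m-\ell}(\eta^k\mu_i)^{-m} \lesssim (\eta^k\mu_i)^{-\ell}$ as long as $\lm_j/(\eta^k\mu_i) \le 1$, which is false. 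So instead, the honest statement is that $T_{\lm_j-\lm_i}$ does not preserve the annular structure perfectly, and the way to count is: think of the top coordinate first (where the approximation is weakest) and induct downward, since $(T_\lm)_{\ell\ell}=1$ and the matrix is unipotent upper triangular. The assertion "$\A\in T_{\lm_j}^{-1}\Omega_j$ for at most $k$ values of $j$" should come from the observation that for each fixed approximating lattice point of $\beta^{(j)}$, pulling back along $T_{\lm_j}$ (a fixed unipotent integer matrix) and using the lacunarity $\lm_j\le \tfrac13\eta^{2k}\mu_{j+1}$, one shows the annuli $T_{\lm_j}^{-1}\Omega_j$ are "almost disjoint" with overlap multiplicity bounded by the number of steps it takes the downward induction to terminate, namely $k$. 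I would structure the proof as: (1) record the cocycle identity $T_aT_b=T_{a+b}$ and that $T_\lm$ maps the lcm lattice to itself mod $\Z^k$; (2) unwind the definitions of $\Omega_j$ into explicit two-sided inequalities on $T_{\lm_j}\A$; (3) suppose $\A$ lies in $k+1$ of the sets $T_{\lm_{j}}^{-1}\Omega_{j}$ with $j_0<j_1<\cdots<j_k$, and run a pigeonhole/descent on the $k$ coordinates: for consecutive indices the "good approximation at scale $\mu_{j_{r+1}}$" and "bad approximation at scale $\lm_{j_r}$" conditions, transported by $T_{\lm_{j_{r+1}}-\lm_{j_r}}$, force the $(k-r)$-th coordinate of the relevant approximation to be already determined, so after $k$ steps all coordinates are pinned and the $(k{+}1)$-st membership is impossible. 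The hard part will be bookkeeping the nonisotropic error propagation through the unipotent matrix so that the lacunarity exponent $\eta^{2k}$ in (\ref{lac}) is exactly what is needed — getting the powers of $\lm$, $\mu$, $\eta$, and $q_\eta$ to line up is the entire difficulty, and it is precisely why the gap condition has the shape $\mu_j\le\lm_j\le\frac13\eta^{2k}\mu_{j+1}$ rather than a single dyadic jump.
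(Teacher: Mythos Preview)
Your proposal has a genuine gap: the specific computations you attempt all fail (as you note yourself), and the final ``pigeonhole/descent'' plan is too vague to be a proof. There is also a misreading of the definition: $\beta\notin M_{q_\eta,\eta^{-k}\lm_j}$ does \emph{not} say $|\beta_\ell-b_\ell/q_\eta^\ell|>(\eta^{-k}\lm_j)^{-\ell}$ for all $\ell$; since the boxes are products, it says this for \emph{some} $\ell$. That existential coordinate is precisely what the pigeonhole should latch onto, and getting this quantifier wrong is why your error-tracking through $T_{\lm_j-\lm_i}$ never closes.

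The missing idea, and the heart of the paper's argument, is a lemma that transfers the annular condition from $T_{\lm_j}\A$ back to $\A$ itself: if $T_{\lm_j}\A\in\Omega_j$ then there is a coordinate $i=i_j\in\{1,\dots,k\}$ and $a\in\Z$ with
\[
\tfrac12\bigl(\eta^k/\lm_j\bigr)^{i}\le\bigl|\A_{i}-a/q_\eta^{i}\bigr|\le\tfrac32\bigl(1/\eta^k\mu_j\bigr)^{i}.
\]
The trick is to take $i$ to be the \emph{largest} coordinate at which the inner-box lower bound on $(T_\lm\A)_i$ fails; then $(T_\lm\A)_{i+1}$ is already well-approximated, and since $(T_\lm\A)_i=\A_i+c_i\lm(T_\lm\A)_{i+1}$ with $c_i$ bounded, the difference between $(T_\lm\A)_i$ and $\A_i$ modulo $q_\eta^{-i}\Z$ is $O(\eta^k(\eta^k/\lm)^i)$, small enough to carry both bounds over to $\A_i$. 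With this lemma the Overlapping Lemma is immediate: if $\A$ lies in $k+1$ of the sets, two indices $j<j'$ share the same $i$, and the two annuli in that coordinate force $\eta^{2k}\mu_{j'}\le 3\lm_j$, contradicting the lacunarity hypothesis. Your approach of staying in the $\beta^{(j)}$-frame and transporting via $T_{\lm_j-\lm_i}$ cannot get there because the error genuinely blows up when $\lm_j/\mu_i\gg1$; the escape is to un-transform once to the fixed $\A$-frame, where the annuli are honestly concentric and the lacunarity separates them.
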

The proof of this result is given in the subsection immediately preceding the proof of Theorem \ref{infinite2}.

\begin{proof}[Proof of Theorem \ref{infinite2}]
Let $\VE>0$, $\eta_\VE=\exp(-C\VE^{-1})$ and $q_\VE=q_{\eta_\VE}$. Suppose, contrary to Theorem \ref{infinite2}, that there exists a set $B\subseteq\N^k$ with $\D=\D(B)>\VE^{1/2}$ such that for all sufficiently long intervals of natural numbers $I$ one has
\be\label{negative2}
\left|\left\{n\in I \,:\,\D(B\cap(B+\gamma(n)))>\D(B)^2-\VE\right\}\right|
<\exp(-C\eta_\VE^{-k})|I|
\ee
for all $C>0$. 

In this case there necessarily exists a sequence of intervals of natural numbers $I_j=(\lm_j,\lm_j+\mu_j]$ with $\mu_1\geq4q_\eta$ and $\mu_j\nearrow\infty$ for which
\be\label{anyinterval}
\left|\left\{n\in I_j \,:\,\D(B\cap(B+\gamma(n)))>\D(B)^2-\VE\right\}\right|
<\exp(-C\eta_\VE^{-k})|I_j|
\ee   
for all $C>0$ and all $j\in\N$. 

Since inequality (\ref{anyinterval}) must then also hold for the \emph{right-half} intervals $I_j'=(\lm_j',\lm_j'+\mu_j']$, where $\mu_j'=\mu_j/2$ and $\lm_j'=\lm_j+\mu_j'$, we see that we can further assume that $\lm_j\rightarrow\infty$.  By passing to a subsequence, one 
may without loss in generality assume that $\mu_1\geq \eta^{-k}q_\VE$ and
\bee
\mu_j \leq \lm_j\leq \frac{1}{3}\,\eta^{2k}\mu_{j+1}
\eee
for all $j\in\N$.

We now fix an integer $J>40k/\VE$.
It follows from the definition of upper density that there must exist $M\in\N$ such that
\[\left|B\cap[1,M]^k\right|\geq (\D-\VE/2) M^k\]
while
\[\left|B\cap(B+\gamma(n))\cap[1,M]^k\right|\leq(\D^2-\VE)M^k\]      
for all $n\in\bigcup_{j=1}^J  I_j$ for which $\D(B\cap(B+\gamma(n)))\leq\D(B)^2-\VE$.

Letting $B'=B\cap[1,M]^k$ it follows that
\[\left|\left\{n\in I_j\,:\, \frac{\left|B'\cap(B'+\gamma(n))\right|}{M^k}>\left(\frac{B'}{M^k}\right)^2-\frac{\VE}{4}\right\}\right|<\exp(-C\eta_\VE^{-k})|I_j|\]
for all $C>0$ and all $1\leq j\leq J$. 

Proposition \ref{dichotomy2} allows us to conclude from this that
\be
\sum_{j=1}^J\int_{T_{\lm_j}^{-1}\Omega_j} |\widehat{1_B}(\A)|^2\,d\A\geq J\VE M^k/40>kM^k.
\ee

On the other hand it follows from Lemma \ref{cor} (with $\Omega_j =\Omega_{\eta_\VE,\lm_j,\mu_j}$) and the Plancherel identity that
\be
\sum_{j=1}^J\int_{T_{\lm_j}^{-1}\Omega_j} |\widehat{1_B}(\A)|^2\,d\A\leq k\int_{\T^k}|\widehat{1_B}(\A)|^2\,d\A\leq k|B|\leq kM^k
\ee
giving us our desired contradiction. 
\end{proof}

%%%%%%%%%%%%%%%%%%%%%%%%%%%%%%%%%%%%%%%%%%%%%%%
\subsection{Proof of the Overlapping Lemma}\label{OL} 

 First we establish the following.
\begin{lem}\label{L3} Suppose that $0<\eta<1/4k^{2}$ and $0<\mu\leq\lm$. If $\A\in T_{\lm}^{-1}\Omega$, where  $\Omega=\Omega_{\eta,\lm,\mu}$, then there exist $1\leq i\leq k$ and $a\in\Z$ such that
\be
\frac{1}{2}\left(\frac{\eta^k}{\lm}\right)^{i}\leq\left|\A_{i}-\frac{a}{q_\eta^i}\right|\leq \frac{3}{2}\left(\frac{1}{\eta^k\mu}\right)^{i}.
\ee
\end{lem}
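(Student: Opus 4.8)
The plan is to unwind the definitions of $T_\lm$ and $\Omega_{\eta,\lm,\mu}$ and track what membership in $T_\lm^{-1}\Omega$ says about the individual coordinates $\A_i$. Recall $\Omega_{\eta,\lm,\mu} = M_{q_\eta,\eta^k\mu}\setminus M_{q_\eta,\eta^{-k}\lm}$. So if $\A\in T_\lm^{-1}\Omega$, then $\B := T_\lm\A$ lies in $M_{q_\eta,\eta^k\mu}$ but not in $M_{q_\eta,\eta^{-k}\lm}$. The first membership gives, for every $1\le j\le k$, integers $b_j$ with $|\B_j - b_j/q_\eta^j| \le (\eta^k\mu)^{-j} = (1/\eta^k\mu)^j$. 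The second non-membership says that there is \emph{no} choice of integers making all $k$ of the inequalities $|\B_j - b_j/q_\eta^j| \le (\eta^{-k}\lm)^{-j} = (\eta^k/\lm)^j$ hold simultaneously; in particular, for the specific $b_j$ produced above (which are forced once $q_\eta$ is fixed, because consecutive admissible centers $a/q_\eta^j$ are spaced $1/q_\eta^j \gg (1/\eta^k\mu)^j$ apart once $\mu \ge \eta^{-k}q_\eta$, so the $b_j$ are unique), at least one index $i$ must fail: $|\B_i - b_i/q_\eta^i| > (\eta^k/\lm)^i$. Combining, for this $i$ we get the two-sided bound $(\eta^k/\lm)^i < |\B_i - b_i/q_\eta^i| \le (1/\eta^k\mu)^i$ with $\B = T_\lm\A$.

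The next step is to transfer this statement about $\B_i = (T_\lm\A)_i$ back to a statement about $\A_i$ itself. From the definition $(T_\lm)_{ij} = \binom{j}{i}\lm^{j-i}$ for $j\ge i$, we have $\B_i = \sum_{j\ge i}\binom{j}{i}\lm^{j-i}\A_j = \A_i + \sum_{j>i}\binom{j}{i}\lm^{j-i}\A_j$. The idea is to run a downward induction on $i$ (from $i=k$ to $i=1$), using that each $\A_j$ for $j>i$ is, by the induction hypothesis or by the structure, already close to the rational $a_j/q_\eta^j$ with a \emph{much smaller} error. Concretely, if for every $j>i$ one has $|\A_j - a_j/q_\eta^j| \le 2(1/\eta^k\mu)^j$, then $\sum_{j>i}\binom{j}{i}\lm^{j-i}|\A_j - a_j/q_\eta^j|$ is bounded by something like $2^{2k}\lm^{k-i}(1/\eta^k\mu)^{i+1} \le 2^{2k}(1/\eta^k\mu)^i \cdot (\lm/\eta^k\mu) \cdot \eta^{-k(i+1)+ki}$... one must be a little careful, but the point is that $\lm^{j-i}(1/\eta^k\mu)^j$ with $\mu\le\lm$ is dominated by $(1/\eta^k\mu)^i$ times a small power of $\eta$, so the whole correction term is $\le \frac14(1/\eta^k\mu)^i$ say, once $\eta < 1/4k^2$. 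Meanwhile $T_\lm$ maps the lattice $\{a/q_\eta^{\bullet}\}$ into itself (this is why this particular nonisotropic lattice was chosen — the binomial structure of $T_\lm$ preserves it), so $b_i/q_\eta^i = a_i/q_\eta^i + (\text{integer combination})$, i.e. the center for $\B_i$ comes from a lattice point, and correspondingly there is a lattice center $a/q_\eta^i$ for $\A_i$. Then $|\A_i - a/q_\eta^i|$ differs from $|\B_i - b_i/q_\eta^i|$ by at most the correction term $\le \frac14(1/\eta^k\mu)^i$, which also must be compared against the lower bound: since the correction in the upper direction is at most a quarter of $(1/\eta^k\mu)^i$ we still get $|\A_i - a/q_\eta^i| \le \frac32(1/\eta^k\mu)^i$, and for the lower bound we similarly lose at most a factor of $2$, i.e. $|\A_i - a/q_\eta^i| \ge \frac12(\eta^k/\lm)^i$, using that $(\eta^k/\lm)^i$ is enormous compared to the correction only after we've checked the correction is genuinely smaller — which is exactly where the gap $\eta^{-k}\lm$ vs $\eta^k\mu$ in the annulus (a factor $\eta^{-2k}\lm/\mu$) buys room.

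The main obstacle I anticipate is bookkeeping the chain of inequalities so that the constants $\frac12$ and $\frac32$ come out exactly as stated, rather than some other absolute constants. There are three competing scales — the small error $(1/\eta^k\mu)^i$, the large lower gap $(\eta^k/\lm)^i$, and the accumulated correction from the off-diagonal entries of $T_\lm$ — and one needs $\eta < 1/4k^2$ to ensure the binomial coefficients $\binom{j}{i}\le 2^k$ and the geometric-type sum over $j>i$ are both absorbed with room to spare. I would handle this by first proving a clean auxiliary estimate: for $\A\in T_\lm^{-1}\Omega$ and each $1\le i\le k$, either $\A_i$ is within $2(1/\eta^k\mu)^i$ of some $a_i/q_\eta^i$ (the "good" alternative that feeds the downward induction), and then showing the non-membership forces at least one $i$ to additionally satisfy the lower bound; the downward induction makes the "good" alternative automatic for all $j>i$ at the stage where index $i$ is the bad one. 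The other point to be careful about is the uniqueness of the lattice center $a_j/q_\eta^j$ near $\A_j$: this needs $2(1/\eta^k\mu)^j < \frac12 q_\eta^{-j}$, i.e. $\mu > 4^{1/j}\eta^{-k}q_\eta$, which is guaranteed (for all $j$, in particular $j=1$) by the hypothesis $\mu\ge\eta^{-k}q_\eta$ together with, if needed, a harmless adjustment of constants — or one simply notes the statement only claims existence of \emph{some} $a$, so uniqueness is not even strictly required, only that $T_\lm$ preserves the lattice so that a valid center exists.
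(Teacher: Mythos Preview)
Your argument has a genuine gap at the step where you transfer the two-sided bound from $\B_i=(T_\lm\A)_i$ back to $\A_i$. First, the claim that ``$T_\lm$ maps the lattice $\{a/q_\eta^{\bullet}\}$ into itself'' is false: already for $k=2$ the lattice point $(0,1/q_\eta^2)$ is sent to $(2\lm/q_\eta^2,\,1/q_\eta^2)$, and $2\lm/q_\eta^2$ is not of the form $a/q_\eta$ unless $q_\eta\mid 2\lm$. So there is no reason the rational $\sum_{j>i}\binom{j}{i}\lm^{j-i}a_j/q_\eta^j$ should lie in $\Z/q_\eta^i$, and hence no lattice centre for $\A_i$ to inherit. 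Second, and more seriously, your size estimate on the correction goes the wrong way: $\lm^{j-i}(1/\eta^k\mu)^j=(1/\eta^k\mu)^i\cdot(\lm/\eta^k\mu)^{j-i}$, and since $\mu\le\lm$ the factor $(\lm/\eta^k\mu)^{j-i}\ge\eta^{-k(j-i)}$ is enormous, not small. Thus even at the first inductive step ($i=k-1$) the tail term $k\lm\,|\A_k-a_k/q_\eta^k|$ can be as large as $k\eta^{-k}(1/\eta^k\mu)^{k-1}$, which swamps both sides of the inequality you are trying to establish.

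The paper avoids both problems by choosing $i$ differently: it takes $i$ to be the \emph{largest} index for which $(T_\lm\A)_i$ is $(\eta^k/\lm)^i$-far from the \emph{coarser} lattice $\Z/q_\eta^{i-1}$. Maximality then forces $(T_\lm\A)_{i+1}$ to lie within $(\eta^k/\lm)^{i+1}$ of some $a''/q_\eta^{i}$ --- note the denominator is $q_\eta^i$, not $q_\eta^{i+1}$. Expressing $(T_\lm\A)_i-\A_i$ through $(T_\lm\A)_{i+1}$ via the recursion built into $T_\lm$, the rational part is an integer multiple of $\lm\cdot a''/q_\eta^i\in\Z/q_\eta^i$, and the error scales like $\lm\cdot(\eta^k/\lm)^{i+1}=\eta^k(\eta^k/\lm)^i$, which \emph{is} small. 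The essential idea you are missing is to control the higher-index coordinates using the \emph{lower} threshold $(\eta^k/\lm)^j$ coming from the non-membership side of the annulus, rather than the upper threshold $(1/\eta^k\mu)^j$ that your downward induction produces, and to coarsen the lattice by one power of $q_\eta$ so that multiplication by $\lm$ keeps you inside it.
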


\begin{proof}[Proof of Lemma \ref{L3}]
Suppose that $T_\lm\A\in\Omega$, then for some $1\leq j\leq k$ we have 
\be\label{inside}
\left|(T_{\lm}\A)_j-\frac{a_j}{q_\eta^j}\right|\geq \left(\frac{\eta^k}{\lm}\right)^j
\ee
for all $a\in\Z^k$, while for all $1\leq j\leq k$ we have
\be\label{outside}
\left|(T_{\lm}\A)_j-\frac{a_j'}{q_\eta^j}\right|\leq\left(\frac{1}{\eta^k\mu}\right)^j
\ee
for some $a'\in\Z^k$.

Denote by $i$ is the largest integer from $\{1,\dots,k\}$ for which
\[\left|(T_{\lm}\A)_i-\frac{a}{q_\eta^{i-1}}\right|\geq \left(\frac{\eta^k}{\lm}\right)^i\]
for all $a\in\Z$. %Note that the fact that $T_{\lm}\A\in \Omega$ ensures the existence of such an integer $i$.

A key observation is that for each $1\leq j\leq k$ one can write
\[(T_\lm\A)_j=\A_j+ \binom{j+1}{j}\lm^{}\A_{j+1}+\cdots+\binom{k}{j}\lm^{k-j}\A_{k}=\A_j+c_j\lm (T_\lm\A)_{j+1}\] 
where $0<c_j\leq j(k-j)<k^2$, that the maximal assumption on $i$ ensures that 
\[\left|(T_{\lm}\A)_{i+1}-\frac{a_i''}{q_\eta^{i}}\right|< \left(\frac{\eta^k}{\lm}\right)^{i+1}\]
for some $a''\in\Z^k$, and hence that 
\be\label{key overlap}
\left|(T_{\lm}\A-\A)_{i}-\frac{a_i'''}{q_\eta^{i}}\right|< 2c_j\eta^k \left(\frac{\eta^k}{\lm}\right)^{i}\leq\frac{1}{2}\left(\frac{\eta^k}{\lm}\right)^{i}<\frac{1}{2}\left(\frac{1}{\eta^k\mu}\right)^{i}
\ee
for some $a'''\in\Z^k$.

We note that it follows immediately from (\ref{key overlap}) and (\ref{outside}) that
\[\left|\A_{i}-\frac{a_i}{q_\eta^i}\right|\leq \frac{3}{2}\left(\frac{1}{\eta^k\mu}\right)^{i}.\]
for some $a\in\Z^k$, while from (\ref{key overlap}) and (\ref{inside}) it follows that for all $a\in\Z^k$ we have
\[\left|\A_{i}-\frac{a_i}{q_\eta^i}\right|\geq \frac{1}{2}\left(\frac{\eta^k}{\lm}\right)^{i}.\qedhere\]
\end{proof}

\begin{proof}[Proof of Lemma \ref{cor}]

If $T_{\lm_j}\A\in \Omega_j$, then Lemma \ref{L3} guarantees the existence of an integer $1\leq i_j\leq k$ such that
\be\label{trapped}
\frac{1}{2}\left(\frac{\eta^k}{\lm_j}\right)^{i_j}\leq\left|\A_{i_j}-\frac{a}{q}\right|\leq \frac{3}{2}\left(\frac{1}{\eta^k\mu_j}\right)^{i_j}
\ee
for some $a\in\Z$.

Suppose there exists $\A\in\T^k$ and distinct integers $j_1,\dots,j_{k+1}$ for which
\[\A\in T_{\lm_{j_1}}^{-1}\Omega_{j_1}\cap\dots\cap T_{\lm_{j_{k+1}}}^{-1}\Omega_{j_{k+1}}.\]
It follows from the pigeonhole principle that there must exists integers $j,j'\in\{j_1,\dots,j_{k+1}\}$, with $j<j'$, for which $i_j=i_{j'}$. Inequality (\ref{trapped}) and the fact that $\mu_1\geq \eta^{-k}q_\eta$ the forces the situation that
\[\eta^{2k}\mu_{j'}< 3\lm_{j}\]
which contradicts (\ref{lac}).
\end{proof}

%\newpage

\section{Formulation of smooth variants of Propositions \ref{dichotomy1} and \ref{dichotomy2}}\label{4}

We now formulate \emph{smooth} functional variants of Proposition \ref{dichotomy1} and \ref{dichotomy2} that are better suited to our Fourier analytic approach.

\subsection{Counting function}

%\subsection{The set up and the key exponential sum estimate}

For $g,h:[1,M]^k\rightarrow[0,1]$ and $q,\lm,\mu\in\N$ we define

\be
\Lambda_{q,\mu}(g,h)=\frac{q}{\mu}\sum_{\substack{n\in(\lm,\lm+\mu] \\ q|n}}\sum_{m\in\Z^k}g(m)h(m-\gamma(n)).
\ee

With $g=h=1_B$ this essentially gives a normalized count for the number of pairs of elements in $B$ whose difference is equal to $\gamma(n)$ with
%$d^2$ with 
$n\in(\lm,\lm+\mu]\cap\Z$ and $q|n$.

Note that it is natural to consider only those $n\in\N$ that are divisible by some (large) natural number $q$. Indeed, as a consequence of the fact that our set $B$ could fall entirely into a subset of $\Z^k$ of the form $x+d\,\Z\times\Z^{k-1}$ with $1\leq d\le\VE^{-1/2}$, it follows that if there were to exist $n\in\N$ such that $B\cap(B+\gamma(n))\ne\emptyset$ for an arbitrary set $B$, then these $n$ would necessarily have to be divisible by all $1\leq d\leq \VE^{-1/2}$ and hence by the least common multiple of all $1\leq d\leq \VE^{-1/2}$, a quantity of size $\exp(C\VE^{-1/2})$.

As before this can be expressed this count on the transform side as
\be\label{FTside}
\Lambda_{q,\mu}(g,h)=\int_{\T^k} \widehat{g}(\A)\overline{\widehat{h}(\A)}S_{\lm,\mu,q}(\A)\,d\A
\ee
where 
\be
S_{\lm,\mu,q}(\A)=\frac{q}{\mu}\sum_{\substack{n\in(\lm,\lm+\mu] \\ q|n}}e^{2\pi i\,\A\cdot\gamma(n)}.
\ee

\begin{Rem}
If the integers $\lm$ and $\mu$ are both divisible by $q$, then one can easily verify that
\be\label{relate3}
S_{\lm,\mu,q}(\A)=S_{\lm/q,\mu/q}(q\circ\A)%=e^{2\pi i \A\cdot\gamma(\lm)}S_{\mu/q}(T_{\lm/q}(q\circ\A))
\ee
where 
\be
q\circ\A=(q\A_1,\dots,q^k\A_k)
\ee 
and as such we can deduce estimates for these new exponential sums, via relations (\ref{weyl relate 1}) and (\ref{weyl relate 2}), from those that are stated in Lemma \ref{Weyl Estimates}. See in particular Lemma \ref{Weyl Estimates 2} below.
\end{Rem}

\subsection{Smooth variants of our dichotomy propositions %\ref{dichotomy1} and \ref{dichotomy2}
}

Let $\vp:\R^k\rightarrow(0,\infty)$ be a Schwartz function satisfying 
\[\widetilde{\vp}(0)=1\geq\widetilde{\vp}(\xi)\geq0\quad\quad\text{and}\quad\quad \widetilde{\vp}(\xi)=0 \ \ \text{for} \ \ |\xi|>1\]
where $\widetilde{\vp}$ denotes the Fourier transform (on $\R^k$) of $\vp$, see (\ref{FTonRk}).

For a given $q\in\N$ and $L>1$ we define
\be\label{normalizedpsi}
\vp_{q,L}(x)=\begin{cases}
%\left(\frac{q}{L}\right)^{k(k-1)/2}\psi\left(L^{-1}\circ m\right)& \ \ \text{if }m\in q\circ\Z^k\\
\left(\frac{q}{L}\right)^{k(k-1)/2}\vp\left(\frac{q\ell_1}{L},\dots,\frac{q^k\ell_k}{L^k}\right)& \ \ \text{if} \ \ x=(q\ell_1,\dots,q^k\ell_k) \ \ \text{for some $\ell\in\Z^k$}\\
 \ 0& \ \ \text{otherwise}
\end{cases}
\ee
It follows from the Poisson summation formula that the Fourier transform (on $\Z^k$) of $\vp_{q,L}$ takes the form
\be
\widehat{\vp}_{q,L}(\A)
=\sum_{\ell\in\Z^k}\widetilde{\vp}\left(L\left(\A_1-\frac{\ell_1}{q}\right),\dots,L^k\left(\A_k-\frac{\ell_k}{q^k}\right)\right).
\ee

We now define 
\[\psi_{q,L}(x)=\vp_{q,L}(T_{\lm}^{*^{-1}}x)\]
where $T_{\lm}^*$ denotes the adjoint of $T_\lm$,
and note that
\[\widehat{\psi}_{q,L}(\A)=\widehat{\vp}_{q,L}(T_{\lm} \A).\]

Note that $\widehat{\vp}_{q,L}$ is supported on $M_{q,L}$ (and hence $\widehat{\psi}_{q,L}$ is supported on $T_\lm^{-1} M_{q,L}$), where $M_{q,L}$ are the major boxes defined by (\ref{box}),
and that we may choose our cutoff function $\vp$ so that
\be
\widehat{\vp}_{q_\VE,\eta_\VE^k \mu}-\widehat{\vp}_{q_\VE,\VE\eta_\VE^{-k}\lm}
\ee
will be essentially supported on $\Omega_{\eta_\VE,\lm,\mu}$
in the sense that 
\be\label{cutoff}
\bigl| \widehat{\vp}_{q_\VE,\eta_\VE^k \mu}(\A)-\widehat{\vp}_{q_\VE,\VE\eta_\VE^{-k}\lm}(\A)\bigr|\leq\VE/10
\ee
whenever $\A\notin\Omega_{\eta_\VE,\lm,\mu}$.

%\be
%\widehat{\psi}_{q_\VE,\eta_\VE^k \mu}-\widehat{\psi}_{q_\VE,\VE\eta_\VE^{-k}\lm}
%\ee
%will be essentially supported on $T_\lm^{-1}\Omega_{\eta_\VE,\lm,\mu}$ in the sense that 
%\be\label{cutoff}
%\bigl| \widehat{\psi}_{q_\VE,\eta_\VE^k \mu}(\A)-\widehat{\psi}_{q_\VE,\VE\eta_\VE^{-k}\lm}(\A)\bigr|\leq\VE/10
%\ee
%whenever $T_\lm\A\notin\Omega_{\eta_\VE,\lm,\mu}$.

The smooth variant of Proposition  \ref{dichotomy2} is then the following:

\begin{propn}[Smooth variant of Proposition \ref{dichotomy2}]\label{smoothdichotomy}
Let $f:[1,M]^k\rightarrow[0,1]$ and set $\D=M^{-k}\sum_{m\in\Z^k}f(m)$.

Let $0<\VE\leq\D^2$ and $1\leq\mu\leq\lm$ be any given pair of integers that satisfy
$\mu\geq\eta_\VE^{-k}q_\VE$ and $M\geq C(\eta_\VE^{-k}\lm)^k$
where
$q_\VE=q_{\eta_\VE}$ with $\eta_\VE=\exp(-C\VE^{-1}\log\VE^{-1})$. 
Then there exists $0<\eta\ll\VE$ satisfying
$\eta_\VE\leq\VE\eta$, such that either
\be\label{SR}
\Lambda_{q,\mu}(f,f)>(\D^2-\VE)M^k
\ee
or
\be\label{smoothint}
\int_{\T^k}|\widehat{f}(\A)|^2\bigl|\widehat{\psi}_{q,L_2}(\A)-\widehat{\psi}_{q,L_1}(\A)\bigr|\,d\A\geq\VE M^k/5
\ee
where $L_1=\eta^{-k}\lm$, $L_2=\eta^k \mu$, and $q=q_\eta$.
\end{propn}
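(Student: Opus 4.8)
The plan is to evaluate $\Lambda_{q,\mu}(f,f)$ on the Fourier side using (\ref{FTside}), to compare the exponential sum $S_{\lm,\mu,q}$ (with $q=q_\eta$) against the smooth multiplier $\widehat\psi_{q,L_2}$, and then to produce the scale $\eta$ by a pigeonholing argument. By (\ref{FTside}),
\[
\Lambda_{q,\mu}(f,f)=\int_{\T^k}|\widehat f(\A)|^2 S_{\lm,\mu,q}(\A)\,d\A=\int_{\T^k}|\widehat f|^2\widehat\psi_{q,L_2}+\int_{\T^k}|\widehat f|^2\bigl(S_{\lm,\mu,q}-\widehat\psi_{q,L_2}\bigr).
\]
For the first term, Plancherel and the convolution identity turn it into $\sum_m f(m)(f*\psi_{q,L_2})(m)$ (choosing $\vp$ even). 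Since $\vp>0$ makes $\psi_{q,L_2}\geq0$, since $\sum_x\psi_{q,L_2}(x)=\widehat\psi_{q,L_2}(0)=1$ (as $\eta^k\mu\gg q_\eta$, from $\mu\geq\eta_\VE^{-k}q_\VE$ and $q_\eta\leq q_\VE$), and since $\psi_{q,L_2}$ is supported in a box whose side lengths are $\ll M$ (from $M\geq C(\eta_\VE^{-k}\lm)^k$, even after conjugation by $T_\lm^*$), writing $f=\D\,1_{[1,M]^k}+f_0$ with $\sum_m f_0(m)=0$ and expanding shows that the cross and boundary terms are $O(\VE M^k)$, the constant term is $\geq\D^2M^k-O(\VE M^k)$, and the remaining term $\int|\widehat{f_0}|^2\widehat\psi_{q,L_2}$ is non-negative; hence $\int|\widehat f|^2\widehat\psi_{q,L_2}\geq\D^2M^k-O(\VE M^k)$.

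For the second term we compare $S_{\lm,\mu,q}$ with $\widehat\psi_{q,L_2}$ pointwise. After reducing modulo $q_\eta$ by (\ref{relate3}) and (\ref{weyl relate 2}), the Weyl estimate of Lemma \ref{Weyl Estimates} — in the form of Lemma \ref{Weyl Estimates 2}, applied at a scale comparable to $\VE$, its length hypothesis being met since $\mu/q_\eta\geq\eta_\VE^{-k}$ dwarfs every fixed power of $\VE^{-1}$ — gives $|S_{\lm,\mu,q}(\A)|=O(\VE)$ once $\A\notin T_\lm^{-1}M_{q_\eta,\eta^k\mu}$, a set on which $\widehat\psi_{q,L_2}$ also vanishes (its support being $T_\lm^{-1}M_{q_\eta,\eta^k\mu}$, provided $\widetilde\vp$ is supported in the unit ball). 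On the complementary inner set $T_\lm^{-1}M_{q_\eta,\eta^{-k}\lm}$, expanding the sum and using that on the (very narrow) box $M_{q_\eta,\eta^{-k}\lm}$ every phase $e^{2\pi i\A\cdot\gamma(n)}$ with $n\in(\lm,\lm+\mu]$ equals $1+O(\eta^k)$ — which is exactly why the inner scale is calibrated to $\lm$ — yields $S_{\lm,\mu,q}(\A)=1+O(\eta^k)$, and likewise $\widehat\psi_{q,L_2}(\A)=1+O(\eta^k)$ there since $\eta^k\mu\ll\eta^{-k}\lm$. Hence $|S_{\lm,\mu,q}-\widehat\psi_{q,L_2}|=O(\VE)$ off $T_\lm^{-1}\Omega_{\eta,\lm,\mu}$ and is $\leq 2$ on $T_\lm^{-1}\Omega_{\eta,\lm,\mu}$, so the second term is in absolute value at most $2\int_{T_\lm^{-1}\Omega_{\eta,\lm,\mu}}|\widehat f|^2+O(\VE M^k)$, and we conclude
\[
\Lambda_{q,\mu}(f,f)\ \geq\ \D^2M^k\ -\ 2\int_{T_\lm^{-1}\Omega_{\eta,\lm,\mu}}|\widehat f(\A)|^2\,d\A\ -\ O(\VE M^k).
\]

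It remains to choose $\eta$. If (\ref{smoothint}) already holds for some admissible $\eta$ we are done, so assume it fails for every admissible $\eta$. By (\ref{cutoff}), $\widehat\psi_{q,L_2}-\widehat\psi_{q,L_1}$ is $\geq c_0$ on the bulk of $T_\lm^{-1}\Omega_{\eta,\lm,\mu}$ and $\leq\VE/10$ off it, so the failure of (\ref{smoothint}) forces the $L^2$-mass of $\widehat f$ over the bulk of each twisted annulus to be $O(\VE M^k)$. One then selects $\eta$ among $\gtrsim\VE^{-1}$ geometrically separated scales in the admissible range $\eta_\VE/\VE\leq\eta\ll\VE$ (the lower constraint being what guarantees $\mu\gg\eta^{-k}q_\eta$ everywhere above) for which, in addition, the $L^2$-mass of $\widehat f$ over the two thin collars of $T_\lm^{-1}\Omega_{\eta,\lm,\mu}$ is $O(\VE M^k)$; this is possible because $\int_{\T^k}|\widehat f|^2\leq\|f\|_1\leq M^k$ while these collars, taken over distinct separated scales, have bounded overlap — an overlap estimate in the spirit of (and proved like) the Overlapping Lemma, Lemma \ref{cor}. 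For this $\eta$ one has $\int_{T_\lm^{-1}\Omega_{\eta,\lm,\mu}}|\widehat f|^2=O(\VE M^k)$, and the displayed inequality gives $\Lambda_{q,\mu}(f,f)>(\D^2-\VE)M^k$ after the absolute constants are absorbed (which one does by running the whole argument with $\VE$ replaced by a fixed small multiple of itself, at the cost of harmlessly shrinking $\eta_\VE$). The step I expect to be the main obstacle is precisely this last scale selection: on the one hand there must be enough admissible scales for the pigeonhole, which is what forces the doubly exponential size of $\eta_\VE=\exp(-C\VE^{-1}\log\VE^{-1})$ — fitting $\gtrsim\VE^{-1}$ geometrically separated scales into $[\,\eta_\VE/\VE,\ \VE\,]$ requires $\log(1/\eta_\VE)\gtrsim\VE^{-1}\log\VE^{-1}$ — and on the other hand one must control how the non-isotropically twisted annuli $T_\lm^{-1}\Omega_{\eta,\lm,\mu}$ at different scales sit relative to one another, which is exactly the role played by Lemmas \ref{L3} and \ref{cor}; the only other delicate point is the non-isotropic rational-approximation bookkeeping needed to make the Weyl major arcs, the boxes $M_{q_\eta,\cdot}$, and the supports of the multipliers $\widehat\vp_{q_\eta,\cdot}$ nest as asserted and to verify (\ref{cutoff}).
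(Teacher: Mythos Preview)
Your overall plan---write $\Lambda_{q,\mu}(f,f)$ on the Fourier side, isolate a main term that already exceeds $\D^2M^k$, and control the remainder via Weyl's inequality and a pigeonhole on scales---is in the right spirit, but there is a genuine gap at the heart of your ``second term'' estimate, and your pigeonhole is the wrong kind to repair it.

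The gap is the assertion that Lemma~\ref{Weyl Estimates} (or Lemma~\ref{Weyl Estimates 2}) yields $|S_{\lm,\mu,q}(\A)|=O(\VE)$ as soon as $\A\notin T_\lm^{-1}M_{q_\eta,\eta^k\mu}$. After dilating by $q=q_\eta$ via (\ref{relate3}), the Weyl estimate with parameter $\eta_0\sim\VE$ only shows $|S_{\lm,\mu,q}(\A)|\leq C\VE$ when, for some $j$, the coordinate $(T_\lm\A)_j$ is far from \emph{every} rational of the form $a/(q_\eta^j q_0)$ with $q_0\leq\VE^{-k}$. Those extra denominators $q_0>1$ produce major arcs that are \emph{not} centred at points of the lattice $\{a/q_\eta^j\}$ used to define $M_{q_\eta,\cdot}$ and $\widehat{\psi}_{q,L_2}$; for instance, near $(T_\lm\A)_j=1/(2q_\eta^j)$ the sum $S_{\lm,\mu,q}$ can be of size $1$, yet $\widehat{\psi}_{q,L_2}$ vanishes there. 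Consequently $|S_{\lm,\mu,q}-\widehat{\psi}_{q,L_2}|$ need not be small off $T_\lm^{-1}\Omega_{\eta,\lm,\mu}$, and the displayed inequality $\Lambda_{q,\mu}(f,f)\geq\D^2M^k-2\int_{T_\lm^{-1}\Omega}|\widehat f|^2-O(\VE M^k)$ does not follow. Your subsequent pigeonhole---on the $L^2$-mass of $\widehat f$ over the annuli $T_\lm^{-1}\Omega_{\eta,\lm,\mu}$---cannot fix this, because it controls the wrong set; moreover, the annuli you would be pigeonholing over have \emph{different} $q_\eta$ at different scales $\eta$, so Lemma~\ref{cor} (which fixes $\eta$ and varies $\lm_j,\mu_j$) does not give the bounded overlap you appeal to.

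The paper closes exactly this gap by a different, $f$-\emph{independent} pigeonhole (Lemma~\ref{lemma3}). One takes a lacunary sequence $\eta_1\gg\eta_2\gg\cdots$ with $\eta_{j+1}\leq(\VE/80C_1)\eta_j$, sets $q_j=q_{\eta_j}$, and notes that the telescoping bound $\sup_\A\sum_j|\widehat\psi_{q_{j+1},L_2}(\A)-\widehat\psi_{q_j,L_2}(\A)|\leq C_2$ forces, for some $j\leq 40C_2/\VE$, the $L^\infty$ estimate $\|\widehat\psi_{q_{j+1},L_2}-\widehat\psi_{q_j,L_2}\|_\infty\leq\VE/40$. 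Setting $\eta=\eta_j$ and $\eta'=\eta_{j+1}$, Lemma~\ref{Weyl Estimates 2} gives $\|(1-\widehat\psi_{q',L_2'})S_{\lm,\mu,q}\|_\infty\leq\VE/40$, and the triangle inequality then yields $\|(1-\widehat\psi_{q,L_2})S_{\lm,\mu,q}\|_\infty\leq\VE/20$. With this in hand one runs a three-term decomposition $f=f_1+f_2+f_3$ with $f_1=f*\psi_{q,L_1}$, $f_2=f-f*\psi_{q,L_2}$: the main term $\Lambda_{q,\mu}(f_1,f_1)$ is $\geq(\D^2-\VE/2)M^k$ because $f_1$ is nearly invariant under shifts by $\gamma(n)$ (this is why the \emph{coarse} scale $L_1=\eta^{-k}\lm$ enters, whereas you used only $L_2$), the $f_2$-terms are $O(\VE M^k)$ by the $L^\infty$ bound just obtained, and what remains is bounded by the integral in (\ref{smoothint}). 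The pigeonhole on $L^2$-mass over annuli that you had in mind is used elsewhere in the paper---in deducing Theorems~\ref{finite2} and~\ref{infinite2} from the dichotomy propositions---not here.
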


\begin{Rem} We have chosen to not explicitly state the analogous smooth variant of Proposition \ref{dichotomy1}, since this would be simply Proposition \ref{smoothdichotomy} with $\mu$ set equal to $\lm$ and $\psi$ replaced with $\vp$. 
\end{Rem}

%\subsection{Proposition \ref{smoothdichotomy} implies Proposition \ref{dichotomy2}}

We finish this section by explicitly showing that Proposition \ref{smoothdichotomy} does indeed imply Proposition \ref{dichotomy2}, the same argument of course also establishes that Proposition \ref{dichotomy1} would follow from its (unstated) analogous smooth variant. 

\begin{proof}[Proof that Proposition \ref{smoothdichotomy} implies Proposition \ref{dichotomy2}]
Let $f=1_B$ and $q=q_\eta$, noting that and $q\leq q_\VE$.  

It is easy to see that if $\Lambda_{q,\mu}(f,f)>(\D^2-\VE)M^k$, then
\be
\left|\left\{n\in(\lm,\lm+\mu]\cap\Z \,:\,|B\cap(B+\gamma(n))|>(\D^2-2\VE)M^k\right\}\right|\geq \frac{c\VE}{q}\mu\geq \frac{c\VE}{q_\VE}\mu
\ee
which immediately gives (\ref{random2}), with $2\VE$ in place of $\VE$, since $q_\VE\leq \exp(C\eta_\VE^{-k})$.

While from the fact that $q|q_\VE$ it follows that
\[\supp\bigl(\widehat{\psi}_{q,L_2}-\widehat{\psi}_{q,L_1}\bigr)\subseteq \supp\bigl(\widehat{\psi}_{q_\VE,\eta_\VE^k \mu}-\widehat{\psi}_{q_\VE,\VE\eta_\VE^{-k}\lm}\bigr)\]
and hence from the remarks preceding Proposition \ref{smoothdichotomy} (in particular (\ref{cutoff})) that (\ref{smoothint}) implies (\ref{roughint2}).
\end{proof}

\section{Proof of Proposition \ref{smoothdichotomy}}\label{5}

We now present the proof of Proposition \ref{smoothdichotomy}, finally completing the proofs of Theorems \ref{finite2} and \ref{infinite2}. 
As opposed to the usual Fourier proofs of S\'ark\"ozy's theorem, which are based on density
increment arguments, here we use an energy increment argument, (in fact a regularity
lemma type decomposition) to obtain optimal recurrence.

\begin{Rem}
We have already noted that in order to establish Theorem \ref{finite2} we need only prove Proposition \ref{smoothdichotomy} with $\mu=\lm$ and $\psi$ replaced with $\vp$. Making these substitutions in the proof below will indeed give a proof of the (unstated) smooth variant of Proposition \ref{dichotomy1} (one must also, in the proof of Lemma \ref{Weyl Estimates 2}, (naturally) replace $T_\lm$ with the identity matrix and increase the size of some constants threefold).
\end{Rem}

\subsection{Decomposition}
Let
$f:[1,M]^k\rightarrow[0,1]$ and $\D=M^{-k}\sum_{m\in\Z^k}f(m)$.

We make the decomposition
\be
f=f_1+f_2+f_3
\ee
where
\be
f_1=f*\psi_{q,L_1}\quad\text{and}\quad
f_2=f-f*\psi_{q,L_2}
\ee
which of course forces 
\be
f_3=f*(\psi_{q,L_2}-\psi_{q,L_1}).
\ee

\comment{
We (formally) define
\be
\Lambda_q(g,h)=\frac{q}{\mu}\sum_{d\in(\lm,\lm+\mu]\cap q\Z}\sum_{m\in\Z^k}g(m)h(m-\gamma(d)).
\ee
}

One should think of $f_1(m)$ (respectively $f*\psi_{q,L_2}(m)$) as being essentially the average value of the function $f$ over arithmetic grids of the form $\{q_\eta\circ \ell\,: \ell\in\Z^k\}$ of (total) size $L_1\times L_1^2\times\cdots\times L_1^k$ (respectively $L_2\times L_2^2\times\cdots\times L_2^k$) centered at $m$.

\subsection{Proof of Proposition \ref{smoothdichotomy}}

Note that
\be
\Lambda_{q,\mu}(f,f)=\Lambda_{q,\mu}(f_1,f_1)+\underbrace{\Lambda_{q,\mu}(f_2,f_1)+\Lambda_{q,\mu}(f,f_2)}_{(\star)}+\underbrace{\Lambda_{q,\mu}(f_3,f_1)+\Lambda_{q,\mu}(f,f_3)}_{(\star\star)}
\ee
where both terms in $(\star)$ involve a $f_2$ and both terms in $(\star\star)$ involve a $f_3$.

The proof of Proposition \ref{smoothdichotomy} will follow as an almost immediate consequence of the following two lemmas.

\begin{lem}[Main term]\label{lemma1}
Let $\VE>0$. If $0<\eta\ll\VE$, then
\be
\Lambda_{q,\mu}(f_1,f_1)\geq (\D^2-\VE/2)M^k
\ee
\end{lem}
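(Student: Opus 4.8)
\textbf{Proof proposal for Lemma \ref{lemma1} (Main term).}

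The plan is to pass to the Fourier side via \eqref{FTside} and exploit the fact that $\widehat{f_1}=\widehat f\,\widehat\psi_{q,L_1}$ is supported on the small neighborhood $T_\lm^{-1}M_{q,L_1}$ of the rational lattice $\{(a_1/q,\dots,a_k/q^k)\}$, where the exponential sum $S_{\lm,\mu,q}(\A)$ exhibits no cancellation. Concretely, I would first write
\[
\Lambda_{q,\mu}(f_1,f_1)=\int_{\T^k}|\widehat{f}(\A)|^2\,|\widehat\psi_{q,L_1}(\A)|^2\,S_{\lm,\mu,q}(\A)\,d\A,
\]
and observe that the integrand is supported on $T_\lm^{-1}M_{q,L_1}$. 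On this set I claim $S_{\lm,\mu,q}(\A)$ is close to $1$: using the relation (via \eqref{weyl relate 2}, \eqref{relate3} and the Remark after Lemma \ref{Weyl Estimates}) that $S_{\lm,\mu,q}(\A)=e^{2\pi i\A\cdot\gamma(\lm)}\,S_{\mu/q}(q\circ T_\lm\A)+O(\eta)$-type expression, and the fact that on $M_{q,L_1}$ each coordinate $q^j(T_\lm\A)_j$ lies within $q^j/L_1^j\le 1/\eta^{-kj}$-type distance of an integer, a direct estimate of the partial Weyl sum (no cancellation over residues, phases nearly constant on the relevant scale since $L_1=\eta^{-k}\lm\ge\mu$) gives $|S_{\lm,\mu,q}(\A)-1|\ll\eta$ there. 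Hence
\[
\Lambda_{q,\mu}(f_1,f_1)=\int_{\T^k}|\widehat f(\A)|^2|\widehat\psi_{q,L_1}(\A)|^2\,d\A+O\!\left(\eta\,\|\widehat{f_1}\|_2^2\right)
=\|f_1\|_2^2+O(\eta M^k),
\]
using Plancherel and $\|f_1\|_2^2\le\|f\|_2^2\le\|f\|_1\le M^k$ (since $0\le f\le 1$).

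Next I would bound $\|f_1\|_2^2$ from below by $(\D^2-\VE/4)M^k$, say. Since $\widetilde\vp(0)=1$ we have $\sum_m\psi_{q,L_1}(m)=\widehat\psi_{q,L_1}(0)=\widehat\vp_{q,L_1}(0)=1$, so $f_1=f*\psi_{q,L_1}$ has the same total mass as $f$, namely $\sum_m f_1(m)=\D M^k$; here one needs $f$ to be supported in $[1,M]^k$ and $\psi_{q,L_1}$ to have its mass concentrated on a box of sidelengths $\ll L_1^k\le(\eta^{-k}\lm)^k\ll M$, so that the convolution stays within an $O(M)$-dilate of the cube and no mass is lost — this is exactly what the hypothesis $M\ge C(\eta_\VE^{-k}\lm)^k$ is for (with minor attention to the Schwartz tails of $\vp$, which contribute $O(\eta M^k)$). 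Then Cauchy--Schwarz gives
\[
\|f_1\|_2^2\;\ge\;\frac{\bigl(\sum_m f_1(m)\bigr)^2}{|\mathrm{supp}\,f_1|}\;\ge\;\frac{(\D M^k)^2}{C\,M^k}\,,
\]
which is not quite good enough; instead I would argue directly that $f_1$ is a genuine local average, $f_1(m)=\mathrm{Avg}_{y\in \text{box}} f(m-y)+O(\eta)$, so that $\sum_m f_1(m)^2\ge M^{-k}(\sum_m f_1(m))^2 - O(\eta M^k)$ by applying Cauchy--Schwarz on the cube of side $\sim M$ containing the support — yielding $\|f_1\|_2^2\ge \D^2 M^k - O(\eta M^k)$.

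Combining the two estimates gives $\Lambda_{q,\mu}(f_1,f_1)\ge\D^2 M^k-O(\eta M^k)\ge(\D^2-\VE/2)M^k$ once $\eta\ll\VE$ with an appropriate implied constant. The main obstacle I anticipate is the first step: making precise and quantitative the claim that $S_{\lm,\mu,q}$ is within $O(\eta)$ of $1$ on the support of $\widehat\psi_{q,L_1}$, since this requires carefully tracking the action of the matrix $T_\lm$ (and the substitution $q\circ\A$) on the box $M_{q,L_1}$ and then estimating a perturbed, non-normalized partial Weyl sum over an arithmetic progression — this is presumably packaged as the "Lemma \ref{Weyl Estimates 2}" referred to in the Remark, and I would state and prove that auxiliary lemma first, then feed it into the computation above. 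A secondary, more routine nuisance is controlling the Schwartz tails of $\vp_{q,L_1}$ (it is not compactly supported in physical space) to justify the "mass is preserved up to $O(\eta M^k)$" and "support is essentially a box" assertions; choosing $\vp$ with sufficiently rapid decay and using $M$ large handles this.
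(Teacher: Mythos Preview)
Your proposal is correct and arrives at the same two ingredients as the paper: (i) $\Lambda_{q,\mu}(f_1,f_1)=\|f_1\|_2^2+O(\eta^k M^k)$, and (ii) $\|f_1\|_2^2\ge(\D^2-O(\eta))M^k$ via Cauchy--Schwarz on a slightly enlarged cube. For step (ii) your final version (``apply Cauchy--Schwarz on a cube of side $\sim M$ containing the support'') is exactly what the paper does, with a parameter $\sigma\sim\VE$ controlling the enlargement and the Schwartz tails.

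The only real difference is in step (i). The paper does \emph{not} go to the Fourier side: it argues directly in physical space that $f_1$ is approximately invariant under translation by $\gamma(n)$ whenever $q\mid n$ and $n\le\lm+\mu\le 2\eta^kL_1$, namely
\[
|f_1(m)-f_1(m-\gamma(n))|\le c\eta^k,
\]
because the shift by $\gamma(n)$ is small compared to the averaging scale of $\psi_{q,L_1}$. Your Fourier-side claim that $S_{\lm,\mu,q}(\A)=1+O(\eta^k)$ on $\operatorname{supp}\widehat\psi_{q,L_1}$ is precisely the Plancherel dual of this statement (both reduce to showing $\A\cdot\gamma(n)\in\Z+O(\eta^k)$ for $T_\lm\A\in M_{q,L_1}$ and $q\mid n$), so the two routes are equivalent. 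The physical-space version is slightly more direct: it avoids untangling the action of $T_\lm$ on $M_{q,L_1}$, which you correctly flag as the main nuisance in your approach.

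One small correction: the auxiliary result you would need is \emph{not} Lemma~\ref{Weyl Estimates 2}. That lemma is a \emph{minor-arc} estimate (it bounds $|S_{\lm,\mu,q}|$ from above where $\widehat\psi$ is small) and is used for Lemma~\ref{lemma3}, not here. What you actually need is the elementary major-arc fact that each phase $e^{2\pi i\A\cdot\gamma(n)}$ is individually $1+O(\eta^k)$, which follows from $\A\cdot\gamma(n)=-\,(T_\lm\A)\cdot\gamma(-\lm)+(T_\lm\A)\cdot\gamma(n-\lm)$ together with $q\mid\lm$, $q\mid n$ and the definition of $M_{q,L_1}$; no Weyl-sum estimate is required.
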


\begin{lem}[Error term]\label{lemma3}
Let $\VE>0$, then there exists $\eta>0$ satisfying $\exp(-C\VE^{-1}\log\VE^{-1})\leq\eta\ll\VE$, such that
\be\label{wholepoint}
\|(1-\widehat{\psi}_{q,L_2})S_{\lm,\mu,q}\|_\infty\leq\VE/20
\ee 
and hence
\be
|\Lambda_{q,\mu}(f_2,f_1)+\Lambda_{q,\mu}(f,f_2)|\leq (\VE/10)M^k.
\ee
\end{lem}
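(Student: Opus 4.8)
The plan is to prove the sup-norm bound \eqref{wholepoint} first, since the bound on the $\Lambda$-terms then follows routinely by Plancherel. Fix $\eta$ (to be pinned down at the end) with $\eta \ll \VE$ and $\eta \geq \exp(-C\VE^{-1}\log\VE^{-1})$, and set $q = q_\eta$, $L_1 = \eta^{-k}\lm$, $L_2 = \eta^k\mu$. Recall $\widehat\psi_{q,L_2}(\A) = \widehat\vp_{q,L_2}(T_\lm\A)$, so $1 - \widehat\psi_{q,L_2}(\A)$ vanishes (up to $O(\VE)$, by the Schwartz decay of $\widetilde\vp$) on $T_\lm^{-1}M_{q,\eta^k\mu}$. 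On the other hand, I want to show $S_{\lm,\mu,q}(\A)$ is small off that same set. Since $q \mid q_\eta$ is not automatic — rather $q = q_\eta$ here — but in any case $q_\eta \mid$ the relevant lcm, I would reduce $S_{\lm,\mu,q}$ to an honest (perturbed) Weyl sum: by the identity \eqref{relate3} together with \eqref{weyl relate 2}, $S_{\lm,\mu,q}(\A)$ equals (a unimodular factor times) $S_{\mu'}(T_{\lm'}(q\circ\A))$ for appropriate $\mu',\lm'$, up to the harmless perturbation error from $\lm,\mu$ not being exact multiples of $q$ (this is exactly the content of the Remark after Lemma~\ref{Weyl Estimates}, and presumably the content of the promised Lemma~\ref{Weyl Estimates 2}).

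Next I would invoke Lemma~\ref{Weyl Estimates} with parameter $\eta$ (legitimate since $\mu \geq \eta_\VE^{-k}q_\VE \gg \eta^{-C}$): if $\A \notin T_\lm^{-1}\mathfrak M_{\eta,\mu}$ in the relevant $q$-dilated sense — i.e.\ if no coordinate of $T_\lm(q\circ\A)$ is within $1/(\eta^k\mu^j)$ of a rational with denominator $\leq \eta^{-k}$ — then $|S_{\lm,\mu,q}(\A)| \leq C_1\eta \leq \VE/40$ provided $\eta \ll \VE$. It remains to handle $\A$ in the major-arc set. Here is where the choice $q = q_\eta = \lcm\{1\leq q\leq\eta^{-k}\}$ does its work: every rational $a/q'$ with $q' \leq \eta^{-k}$ is of the form $a'/q_\eta$, so the major arcs $\mathfrak M_{\eta,\mu}$ are contained in $M_{q_\eta,\,c\eta^k\mu}$ (after the $T_\lm$-conjugation, this is $T_\lm^{-1}M_{q_\eta,c\eta^k\mu}$); on this set $\widehat\psi_{q,L_2}(\A) = \widehat\vp_{q,\eta^k\mu}(T_\lm\A)$ is within $\VE/20$ of $1$ because $\widetilde\vp \equiv 1$ on a neighbourhood of the origin and decays rapidly, so $|1 - \widehat\psi_{q,L_2}(\A)|$ is small exactly where $S_{\lm,\mu,q}$ need not be. Since $|S_{\lm,\mu,q}| \leq 1$ trivially, $(1-\widehat\psi_{q,L_2})S_{\lm,\mu,q}$ is small on the major arcs too, and combining the two regimes gives \eqref{wholepoint}, with $\eta$ chosen as a suitable multiple of $\VE$ (say $\eta = c\VE$, which also guarantees $\eta \gg \eta_\VE$ and hence $\eta_\VE \leq \VE\eta$ as required by Proposition~\ref{smoothdichotomy}).

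For the second assertion, write $\Lambda_{q,\mu}(f_2,f_1) = \int_{\T^k}\widehat{f_2}(\A)\overline{\widehat{f_1}(\A)}S_{\lm,\mu,q}(\A)\,d\A$ via \eqref{FTside}. Since $\widehat{f_2} = (1-\widehat\psi_{q,L_2})\widehat f$ and $\widehat{f_1} = \widehat\psi_{q,L_1}\widehat f$, and $0 \leq \widehat\psi_{q,L_1} \leq 1 + O(\VE)$ pointwise (Schwartz decay, $\widetilde\vp \leq 1$), Cauchy--Schwarz together with \eqref{wholepoint} bounds this by $\|(1-\widehat\psi_{q,L_2})S_{\lm,\mu,q}\|_\infty \|\widehat f\|_2^2 \cdot(1+O(\VE)) \leq (\VE/20)\|f\|_2^2(1+O(\VE)) \leq (\VE/20)M^k(1+O(\VE))$, using $\|f\|_2^2 \leq \|f\|_1 \leq M^k$ since $0\leq f\leq 1$. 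The term $\Lambda_{q,\mu}(f,f_2)$ is estimated the same way (now with $\widehat f$ in place of $\widehat{f_1}$, which is even easier). Adding the two and absorbing the $O(\VE^2)$ gives $|\Lambda_{q,\mu}(f_2,f_1) + \Lambda_{q,\mu}(f,f_2)| \leq (\VE/10)M^k$.

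The main obstacle is the first regime-switch: making precise that the major arcs for the Weyl sum $S_{\lm,\mu,q}$ — whose centers are arbitrary reduced fractions with denominator $\leq\eta^{-k}$ and which, after the $T_\lm$-conjugation, are nonisotropic boxes of a slightly awkward shape — are genuinely swallowed by the single clean lattice box $T_\lm^{-1}M_{q_\eta,\,\eta^k\mu}$ on which $\widehat\psi_{q,L_2}$ is controlled, uniformly over the shift $\lm$. This is precisely the bookkeeping that the (forthcoming) Lemma~\ref{Weyl Estimates~2} is designed to package, so in the write-up I would state and prove that lemma first and then quote it here; the rest is Plancherel and Cauchy--Schwarz.
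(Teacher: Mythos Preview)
Your plan contains a genuine gap, and it is precisely the step you flag as ``bookkeeping'' at the end. The issue is the $q\mid n$ restriction in $S_{\lm,\mu,q}$. After the rescaling $S_{\lm,\mu,q}(\A)=S_{\lm/q,\mu/q}(q\circ\A)$ you correctly carry out, applying Lemma~\ref{Weyl Estimates} with parameter $\eta$ says that the sum is $O(\eta)$ unless $\beta_j=q^j(T_\lm\A)_j$ is within $1/(\eta^k(\mu/q)^j)$ of some $a_j/r$ with $r\leq\eta^{-k}$. Undoing the dilation, this places $(T_\lm\A)_j$ near $a_j/(q_\eta^j\,r)$. The denominators $q_\eta^j r$ are \emph{not} all of the form $q_\eta^j$ (there is no reason $r\mid a_j$), so the major arcs for $S_{\lm,\mu,q}$ are \emph{not} contained in $M_{q_\eta,\,c\eta^k\mu}$; your sentence ``every rational $a/q'$ with $q'\leq\eta^{-k}$ is of the form $a'/q_\eta$, so the major arcs $\mathfrak{M}_{\eta,\mu}$ are contained in $M_{q_\eta,c\eta^k\mu}$'' is true for the unrestricted sum $S_{\lm,\mu}$ but does not apply here. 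Consequently there is a region --- $(T_\lm\A)_j$ near $a/(q_\eta^j r)$ with $r\nmid a$ --- on which neither $|S_{\lm,\mu,q}(\A)|$ nor $|1-\widehat\psi_{q,L_2}(\A)|$ is small, and \eqref{wholepoint} does not follow.

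The paper handles this by introducing a \emph{second}, finer scale $\eta'<\VE\eta$: Lemma~\ref{Weyl Estimates 2} gives $\|(1-\widehat\psi_{q',L'_2})S_{\lm,\mu,q}\|_\infty\leq 2C_1\eta'/\eta$ with $q'=q_{\eta'}$, because the major arcs above \emph{are} captured by the finer lattice $q'$ (this is where one uses $q_\eta q_{\eta_0}\mid q_{\eta'}$ with $\eta_0=\eta'/\eta$). One then needs $\widehat\psi_{q,L_2}$ and $\widehat\psi_{q',L_2}$ to be close, and this is \emph{not} automatic for an arbitrary pair $(\eta,\eta')$; the paper finds a good pair by a pigeonhole/energy-increment over a lacunary sequence $\eta_1\gg\eta_2\gg\cdots$ of length $O(\VE^{-1})$, which is exactly what forces $\eta$ down to $\exp(-C\VE^{-1}\log\VE^{-1})$ rather than the $\eta\asymp\VE$ you propose. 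Indeed, your choice $\eta=c\VE$ would upgrade the main theorems to single-exponential bounds in $\VE^{-1}$, a strong signal that a step has been skipped. The second half of your argument (Plancherel plus $\|f\|_2^2\leq M^k$) is fine and matches the paper.
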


\begin{proof}[Proof of Proposition \ref{smoothdichotomy}]
If
$\Lambda_{q,\mu}(f,f)\leq(\D^2-\VE)M^k$, then it follows from
Lemma \ref{lemma1} that
\[|\Lambda_{q,\mu}(f,f)-\Lambda_{q,\mu}(f_1,f_1)|\geq(\VE/2)M^k.\]
Since
\[|\Lambda_{q,\mu}(f_3,f_1)+\Lambda_{q,\mu}(f,f_3)|\geq |\Lambda_{q,\mu}(f,f)-\Lambda_{q,\mu}(f_1,f_1)|-
|\Lambda_{q,\mu}(f_2,f_1)+\Lambda_{q,\mu}(f,f_2)|\]
it consequently follows from Lemma \ref{lemma3} that
\[|\Lambda_{q,\mu}(f_3,f_1)+\Lambda_{q,\mu}(f,f_3)|\geq (2\VE/5)M^k.\]

The proposition then follows from the observation that
\be\label{max}
\max\{|\Lambda_{q,\mu}(f_3,f_1)|,|\Lambda_{q,\mu}(f,f_3)|\}
\leq \int_{\T^k}|\widehat{f}(\A)|^2\bigl|\widehat{\psi}_{q,L_2}(\A)-\widehat{\psi}_{q,L_1}(\A)\bigr|\,d\A.
\ee
which follows from standard properties of convolutions under the action of the Fourier transform, identity (\ref{FTside}), and trivial bounds for the exponential sum $S_{\lm,\mu,q}$.\end{proof}

\subsection{Proof of Lemma \ref{lemma1}}

Let $q=q_\eta$.
If $q|n$ and $\lm<n\leq\lm+\mu$ (and hence $n\leq2\eta^k L_1$), then it is straightforward to see that $\vp$ can be chosen so that $f_1$ is essentially invariant under translation by $\gamma(n)$ in the the sense that
\be
\left|f_1(m)-f_1(m-\gamma(n))\right|=\left(\frac{q}{L}\right)^{k(k-1)/2}\sum_{\ell\in\Z^k}\left|\vp\left(\frac{q\ell_1-n}{L},\dots,\frac{q^k\ell_k-n^k}{L^k}\right)-\vp\left(\frac{q\ell_1}{L},\dots,\frac{q^k\ell_k}{L^k}\right)\right|\leq c\eta^k
\ee
for some constant $c>0$.

Therefore, provided $\eta$ is chosen so that $c\eta^k\leq \VE/4$, we have
\begin{align}
\Lambda_{q,\mu}(f_1)
%&\geq \sum_{m\in\Z^k} f_1(m)^2 +\frac{q}{\mu}\sum_{d\in(\lm,\lm+\mu]\cap q\Z}\sum_{m\in\Z^k}f_1(m)\left[f_1(m-\gamma(d))-f_1(m)\right]\\
&\geq \sum_{m\in\Z^k} f_1(m)^2 -\frac{\VE}{4}\sum_{m\in\Z^k}f_1(m).
\end{align}
Since $\psi_{q,L_1}$ is $L^1$-normalized it follows that
\be
\sum_{m\in\Z^k}f_1(m)=\sum_{m,\ell\in\Z^k}f(m-\ell)\psi_{q,L_1}(\ell)=\sum_{m\in\Z^k}f(m)=\D M^k.
\ee
Using Cauchy-Schwarz, one obtains
\be
\sum_{m\in\Z^k} f_1(m)^2\geq\sum_{-\sigma M\leq m_j\leq M+\sigma M} f_1(m)^2
\geq  \frac{1}{(1+2\sigma)^k M^k} \left(\sum_{-\sigma M\leq m_j\leq M+\sigma M} f_1(m)\right)^2
\ee 
Since $f$ is supported on $[1,M]^k$ (and $\psi_{q,L_1}$ is $L^1$-normalized) it follows that

\be
\sum_{-\sigma M\leq m_j\leq M+\sigma M} f_1(m)\geq \sum_{m\in\Z^k}f(m)\left(1-\sum_{|\ell_j|\geq\sigma M} \psi_{q,L_1}(\ell)\right)\geq \D M^k (1-\sigma)
\ee
as $\vp$ can be chosen so that $\sum_{|\ell_j|\geq\sigma M} \psi_{q,L_1}(\ell)\leq \sigma$ whenever $M\gg L_1$.

Note that \[\frac{(1-\sigma)^2}{(1+2\sigma)^k}\geq (1-\sigma)^2(1-2\sigma k)\geq1-4k\sigma\]
provided $2\sigma k<1$. Hence taking $\sigma=\VE/16k$ completes the proof.
\qed

\subsection{Proof of Lemma \ref{lemma3}}
It is in establishing Lemma \ref{lemma3} that we finally exploit the arithmetic properties of the curve $\gamma(n)$. In particular, we will make use of the following ``minor arc" estimates for the exponential sums $S_{\lm,\mu,q}$.

\begin{lem}[Corollary of Lemma \ref{Weyl Estimates}]\label{Weyl Estimates 2}
Let $\VE>0$. If $0<\eta\ll\VE$ and $0<\eta'<\VE\eta$, then
\be\label{MainSquares}
\|(1-\widehat{\psi}_{q',L'_2})S_{\lm,\mu,q}\|_\infty\leq 2C_1\eta'/\eta
\ee
where  $q'=q_{\eta'}$ and $L'_2=\eta'^k\mu$.
\end{lem}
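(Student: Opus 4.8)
The plan is to reduce $S_{\lm,\mu,q}$ to a (perturbed) Weyl sum governed by Lemma \ref{Weyl Estimates}, run the minor/major-arc dichotomy there, and show that in the major-arc case the cutoff $\widehat{\psi}_{q',L'_2}$ equals $1$ exactly at the frequency in question — so that the factor $1-\widehat{\psi}_{q',L'_2}$ kills the term — while in the minor-arc case $S_{\lm,\mu,q}$ is itself already $\le 2C_1\eta'/\eta$. Since $0\le\widehat{\psi}_{q',L'_2}\le 1$ on all of $\T^k$ (by the normalization of $\vp$, as in the remarks preceding Proposition \ref{smoothdichotomy}), combining the two cases yields the stated supremum bound.

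First I would substitute $n=qm$ (recall $q=q_\eta$; cf.\ (\ref{relate3})); using $\A\cdot\gamma(qm)=(q\circ\A)\cdot\gamma(m)$ together with relation (\ref{weyl relate 2}) and the perturbed-sum Remark following Lemma \ref{Weyl Estimates}, this gives $|S_{\lm,\mu,q}(\A)|=|S_{\mu/q}(T_{\lm/q}(q\circ\A))|$, a normalized Weyl sum of length $\mu/q$ with possibly non-integral endpoints to which Lemma \ref{Weyl Estimates} applies. To use it with parameter $\theta=\eta'/\eta$ one needs $\mu/q\ge\theta^{-C}$, which is guaranteed by the hypotheses $\mu\ge\eta_\VE^{-k}q_\VE$ and $\eta_\VE\le\VE\eta$: indeed $q=q_\eta\le\exp\!\bigl(C(\VE/\eta_\VE)^k\bigr)$ while $q_\VE\ge\exp(c\,\eta_\VE^{-k})$ by Chebyshev's bound on $\lcm\{n\le x\}$, so $\mu/q\ge\exp(\tfrac c2\,\eta_\VE^{-k})$, which dwarfs $\theta^{-C}$ in the relevant range $\eta'\gtrsim\eta_\VE$.

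Now apply Lemma \ref{Weyl Estimates} with $\theta$ in place of $\eta$. If $|S_{\mu/q}(T_{\lm/q}(q\circ\A))|\le 2C_1\theta=2C_1\eta'/\eta$, we are done for this $\A$. Otherwise the perturbed form of Lemma \ref{Weyl Estimates} forces, for every $1\le j\le k$, integers $a_j,d_j$ with $1\le d_j\le\theta^{-k}$ and $|(T_{\lm/q}(q\circ\A))_j-a_j/d_j|\le\theta^{-k}(\mu/q)^{-j}$. A direct computation from the definitions gives the identity $(T_{\lm/q}(q\circ\A))_j=q^j(T_\lm\A)_j$; dividing by $q^j$ and replacing each $d_j$ by $q_\theta=\lcm\{n\le\theta^{-k}\}$ turns this into
\[\Bigl|(T_\lm\A)_j-\frac{b_j}{(q\,q_\theta)^j}\Bigr|\le\frac{\eta^k}{\eta'^{k}\mu^j}\qquad(1\le j\le k)\]
for suitable $b_j\in\Z$. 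The arithmetic point is that $q\,q_\theta=q_\eta\,q_{\eta'/\eta}$ divides $q'=q_{\eta'}$, which one checks prime-by-prime: $p^a\le\eta^{-k}$ and $p^b\le(\eta/\eta')^k$ force $p^{a+b}\le\eta'^{-k}$. Hence $b_j/(q\,q_\theta)^j=\ell_j/q'^{j}$ for some $\ell_j\in\Z$, and since $\eta^k/(\eta'^k\mu^j)\le\eta^k(L'_2)^{-j}$ with $L'_2=\eta'^k\mu$, the point $T_\lm\A$ lies within rescaled distance $\sqrt k\,\eta^k<\tfrac12$ of the lattice point $(\ell_1/q',\dots,\ell_k/q'^{k})$. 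Choosing the fixed cutoff $\vp$ so that in addition $\widetilde{\vp}\equiv 1$ on $\{|\xi|\le\tfrac12\}$, the corresponding term of the series for $\widehat{\vp}_{q',L'_2}(T_\lm\A)$ equals $1$, whence $\widehat{\psi}_{q',L'_2}(\A)=\widehat{\vp}_{q',L'_2}(T_\lm\A)=1$ and $1-\widehat{\psi}_{q',L'_2}(\A)=0$.

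The main obstacle is the bookkeeping in this last step. One must choose the Weyl parameter $\theta$ comparable to $\eta'/\eta$ — small enough that the minor-arc estimate reads $2C_1\eta'/\eta$, yet large enough that the length condition $\mu/q\ge\theta^{-C}$ survives — and one must verify that the major box handed back by Lemma \ref{Weyl Estimates}, after being transported through $T_\lm^{-1}$ and rescaled by $q$, sits inside the flat core of $M_{q',L'_2}$. This is exactly where the divisibility $q_\eta\,q_{\eta'/\eta}\mid q_{\eta'}$ and the width comparison $\eta^k/(\eta'^k\mu^j)\le\eta^k(L'_2)^{-j}$ are used; everything else (the substitution $n=qm$, the identity $(T_{\lm/q}(q\circ\A))_j=q^j(T_\lm\A)_j$, and the passage from $\widehat{\vp}$ to $\widehat{\psi}$ through $T_\lm$) is routine.
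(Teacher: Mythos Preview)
Your proof is correct and follows the same route as the paper: reduce $S_{\lm,\mu,q}(\A)$ via (\ref{relate3}) and (\ref{weyl relate 2}) to a perturbed Weyl sum of length $\mu/q$ in the variable $T_{\lm/q}(q\circ\A)$, apply the dichotomy of Lemma~\ref{Weyl Estimates} with parameter $\eta_0=\eta'/\eta$, and use the divisibility $q_\eta\,q_{\eta_0}\mid q_{\eta'}$ (which the paper simply asserts and you verify prime-by-prime) to place the resulting major arcs inside the flat region of $\widehat{\psi}_{q',L'_2}$. The only cosmetic difference is that on the major arcs the paper concludes $|1-\widehat{\psi}_{q',L'_2}(\A)|\le\VE$ rather than $=0$; your extra normalization $\widetilde\vp\equiv1$ on $\{|\xi|\le\tfrac12\}$ is harmless and in fact delivers the stated bound $2C_1\eta'/\eta$ more cleanly.
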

\begin{proof}
Let $\eta_0=\eta'/\eta$ and  $\A\in\T^k$ be fixed. 
If there exists $a\in\Z^k$ such that
\[\left|(T_\lm\A)_i-\frac{a}{q'^i}\right|\leq\frac{\VE}{(\eta'^k\mu)^i},\]
for all $1\leq i\leq k$, then (as remarked earlier) $\vp$ can be chosen such that
\be\label{*}
|1-\widehat{\psi}_{q',L'_2}(\A)|\leq \VE.\ee

While if
for some $1\leq i\leq k$ we have
\bee
\left|(T_\lm\A)_i-\frac{a}{q'^i}\right|>\frac{\VE}{(\eta'^k\mu)^i}
\eee
for all $a\in\Z^k$,
then
\[\left|q^i(T_\lm\A)_i-\frac{a}{q_0^i}\right|>\frac{q^i}{(\eta_0^{k}\mu)^i}\]
for all $a\in\Z^k$, since $q q_0|q'$ where $q_0=q_{\eta_0}$.

Since 
\[q\circ(T_\lm\A)=T_{\lm/q}(q\circ\A)\]
it follows from (\ref{relate3}) that
\[S_{\lm,\mu,q}(\A)=\frac{1}{\mu'}\sum_{s\in(\lm'+q^{-1},\lm'+\mu']\cap\Z} e^{2\pi i \gamma(s)\cdot(q\circ\A)}\]
where $\lm'=\lm/q$ and $\mu'=\mu/q$ and the remark proceeding Lemma \ref{Weyl Estimates} 
that
\be\label{minorest 2}
\left|S_{\lm,\mu,q}(\A)\right|\leq 2C_1\eta_0.\qedhere
\ee
\end{proof}

\begin{proof}[Proof of Lemma \ref{lemma3}]

We first construct the number $\eta>0$. Choosing a lacunary sequence $\{\eta_j\}$ for which 
\be
\eta_1\ll \VE\quad \text{and}\quad \eta_{j+1} \leq(\VE/80C_1)\eta_{j}
\ee for each $j\geq1$ it is easy to see that
\[\sup_{\A\in\T^k}\sum_{j=1}^\infty\bigl|\widehat{\psi}_{q_{j+1},L_2}(\A)-\widehat{\psi}_{q_{j},L_2}(\A)\bigr|\leq C_2\]
where $q_j=q_{\eta_j}$. It follows immediately that there must exist $1\leq j\leq 40C_2/\VE$ such that
\be\label{80}
\|\widehat{\psi}_{q_{j+1},L_2}-\widehat{\psi}_{q_{j},L_2}\|_\infty\leq  \VE/40.
\ee
We set $\eta=\eta_j$ and $\eta'=\eta_{j+1}$ for this value of $j$ and note that $\eta$ satisfies the inequality 
\[\exp(-C\VE^{-1}\log\VE^{-1})\leq\eta\ll\VE.\]

Estimate (\ref{wholepoint}) now follows immediately from Lemma \ref{Weyl Estimates 2} and (\ref{80}), since 
\be
\|(1-\widehat{\psi}_{q,L_2})S_{\lm,\mu,q}\|_\infty\leq\|(1-\widehat{\psi}_{q',L_2})S_{\lm,\mu,q}\|_\infty+\|(\widehat{\psi}_{q,L_2}-\widehat{\psi}_{q',L_2})S_{\lm,\mu,q}\|_\infty\leq 2C_1\eta'/\eta+\VE/40
\ee
and $\eta'/\eta\leq \VE/80C_1$.

\comment{We now define the following new function.
Let
\[f'_2=f*\psi_{q',L_2}\]
where $q'=q_{\eta'}$.}

Lemma \ref{lemma3} now follows, since by
arguing as in the proof of Proposition \ref{smoothdichotomy} above, we obtain
\begin{align*}
\max\{|\Lambda_{q,\mu}(f_2,f_1)|,|\Lambda_{q,\mu}(f,f_2)|\}&\leq \int_{\T^k} |\widehat{f}(\A)|^2\,\bigl|1-\widehat{\psi}_{q,L_2}(\A)\bigr||S_{\lm,\mu,q}(\A)|\,d\A\\
& \leq \|(1-\widehat{\psi}_{q,L_2})S_{\lm,\mu,q}\|_\infty M^k\end{align*}
where the last inequality follows from Plancherel and the fact that $\|f\|_2^2\leq\|f\|_1\leq M^k$. 
\end{proof}

%%%%%%%%%%%%%%%%%%%%%%%%%%%%%%%%%%%%%%%%

\section{The proofs of Theorems \ref{finite1} and \ref{infinite1}}

In both of the proofs below we fix a polynomial $P(n)$ with integer coefficients, namely
\[P(n)=c_1 n+\cdots+c_k n^k\]
and let $\mathcal{P}:\Z^k\rightarrow\Z$ denote the mapping given by
\[\mathcal{P}(b)=c_1 b_1+\cdots+c_k b_k.\]
Furthermore, given any set $A\subseteq\Z$ we define
\[A_j=\{a\in A\,:\,a\equiv j\mod m\}=A\cap(m\Z+j)\]
for each $0\leq j\leq m-1$ where \[m=\gcd(c_1,\dots,c_k).\] 

\subsection{Deduction of Thereom \ref{finite1} from Theorem \ref{finite2}}\label{finite}

Let $\VE>0$ and $A\subseteq[1,N]$ with $\D=|A|/N$ satisfying $0<\VE\leq\D^2$. 
We suppose that
\[|A\cap(A+P(n))|\leq (\D^2-\VE)N\]
for some $n\in\N$.
Without loss in generality we will make the convenient additional assumption that $m|N$. 
%For each $0\leq j\leq m-1$ we define $\D_j=m|A_j|/N$ and note that t

It is easy to see that there necessarily exists $0\leq j\leq m-1$ such that
\[|A_j\cap(A_j+P(t))|\leq(\D_j^2-\VE)N/m\]
with $\D_j=m|A_j|/N$.
%Since $A_j-j\subseteq m\Z\cap[1,N]$ it follows that
%\[\mathcal{P}^{-1}(A_j-j)\subseteq \mathcal{P}^{-1}(m\Z\cap[1,N]).\]
If we now let
\[B'=\{b\in\Z^k\,:\,\mathcal{P}(b)\in A_{j}-j\}\cap Q\]
where
\[Q= \mathcal{P}^{-1}(m\Z\cap[1,N])\cap[-N',N']^k\]
and $N'$ is some suitably large multiple of $N$ (depending only on the coefficients of $P$) then it follows that 
\[\D_j=|B'|/|Q|\]
and
\[\frac{|B'\cap(B'+\gamma(n))|}{|Q|}=\frac{|A_j\cap(A_j+P(n))|}{N/m}.\]
%for all $t\in\N$.

We now set $M=\eta N/m$ for some suitably small $\eta>0$,
\[X=\{x\in(M\Z)^k\,:\,x+[1,M]^k\subseteq Q\}\]
and
\[Q'=\bigcup_{x\in X} (x+[1,M]^k),\]
noting that we can clearly choose $\eta\ll\VE$ to ensure that
\[\frac{|Q\setminus Q'|}{|Q|}\leq\frac{\VE}{10}\quad\Longleftrightarrow\quad\frac{|Q|}{|Q'|}\leq1+\frac{\VE}{9}.\]
Thus, if we set $B''=B'\cap Q'$ and $\B=|B''|/|Q'|$, it follows that
\[\B\geq\D_j-\VE/10\]
and
\[\frac{|B''\cap(B''+\gamma(t))|}{|Q'|}\leq \frac{|B'\cap(B'+\gamma(t))|}{|Q|}\frac{|Q|}{|Q'|}\leq\B^2-\VE/2.\]
It follows that there must exist $x\in X$ such that if we set
\[B=B''\cap(x+[1,M]^k)\]
then
\[\frac{|B\cap(B+\gamma(n))|}{M^k}\leq \left(\frac{|B|}{M^k}\right)^2-\VE/2.\]

In summary we have shown that for any given set $A\subseteq[1,N]$ and $\VE>0$ there exists a set $B\subseteq[1,M]^k$ with $M\ll \VE N/m$ such that
\[\left\{n\in\N\,:\, \frac{|B\cap(B+\gamma(n))|}{M^k}> \left(\dfrac{|B|}{M^k}\right)^2-\VE/2\right\}\subseteq\left\{n\in\N\,:\, \frac{|A\cap(A+P(n))|}{N}> \left(\dfrac{|A|}{N}\right)^2-\VE\right\}\]
and hence Theorem \ref{finite1} now follows %(with $C=m^{-1/k}$) 
from Theorem \ref{finite2}.
\qed

\subsection{Deduction of Theorem \ref{infinite1} from Theorem \ref{infinite2}}\label{infinite}
Let $\VE>0$ and $A\subseteq\N$ with $\D=\D(A)$ satisfying $0<\VE\leq\D^2$.
We suppose that
\[\D(A\cap(A+P(n)))\leq \D^2-\VE\]
for some $n\in\N$.
It follows from the definition of upper density that there exists a sequence of intervals $\{I_i\}$ with $|I_i|=N_i$, where $\{N_i\}\subseteq m\N$ and $N_i\nearrow\infty$, such that
\[\frac{|(A\cap I_i)\cap((A\cap I_i)+P(n))|}{N_i}\leq \D^2-\VE/2\]
while
\[\frac{|A\cap I_i|}{N_i}\geq \D-\VE/10.\]
If we define $\D_i=|A\cap I_i|/N_i$, it therefore follows that
\[\frac{|(A\cap I_i)\cap((A\cap I_i)+P(n))|}{N_i}\leq \D_i^2-\VE/5.\]
%Now for any set $A'\subseteq\Z$ we define
%\[A'_j=\{a\in A'\,:\,a\equiv j\mod m\}=A'\cap(m\Z+j)\]
%for each $0\leq j\leq m-1$.
Note that $(A\cap I_i)_j=A_j\cap I_i$. If we set $\D_{ij}=m|A_j\cap I_i|/N_i$, then
\[\D_i=\frac{1}{m}\sum_{j=0}^{m-1}\D_{ij}\]
and as a consequence of the Cauchy-Schwarz inequality we have
\[\D_i^2\leq \frac{1}{m}\sum_{j=0}^{m-1}\D_{ij}^2.\]
It therefore follows immediately from the fact that
\[\sum_{j=0}^{m-1}|(A_j\cap I_i)\cap((A_j\cap I_i)+P(n))|=|(A\cap I_i)\cap((A\cap I_i)+P(n))|\leq \frac{1}{m}\sum_{j=0}^{m-1}(\D_{ij}^2-\VE/5)N_i\]
that for each $i$ there must exist $0\leq j\leq m-1$ for which 
\[|(A_j\cap I_i)\cap((A_j\cap I_i)+P(n))|\leq (\D_{ij}^2-\VE/5)N_i/m.\]

We will assume (by refining our collection $\{I_i\}$ if necessary) that the same $j$ is selected for each $i$. Since $(A\cap I_i)_j=A_j\cap I_i$ it follows, from the definition of upper density, that 
\[\D(A_j)\geq\D_{ij}/m\]
and
\[\D(A_j\cap(A_{j}+P(n)))\leq (\D_{ij}^2-\VE/5)/m.\]

If we now define
\[B=\{b\in\Z^k\,:\,\mathcal{P}(b)\in A_{j}-j\}\]
it follows immediately that
\[\D(B)=m\D(A_j)\]
\[\D(B\cap(B+\gamma(n)))=m\D(A_j\cap(A_j+P(n)))\]
 and consequently
\[\D(B\cap(B+\gamma(n)))\leq \D(B)^2-\VE/5.\]

In summary we have shown that for any given set $A\subseteq\N$ with $\D(A)>0$ and $\VE>0$ there exists a set $B\subseteq\Z^k$ with $\D(B)>0$ such that
\[\left\{n\in \N \,:\,\D(B\cap(B+\gamma(n)))>\D(B)^2-\VE/5\right\}\subseteq\left\{n\in \N \,:\,\D(A\cap(A+P(n)))>\D(A)^2-\VE\right\}\]
and hence Theorem \ref{infinite1} follows immediately from Theorem \ref{infinite2}.
\qed

\section{The parameter $L$ in Theorem \ref{infinite1} necessarily depends on the set $A$}\label{A}

In this final section we construct an example to show that the parameter $L$ in Theorem \ref{infinite1} necessarily depends on the actual set $A$ and not just on its density. 

\begin{propn} Let $P\in\Z[n]$ with $P(0)=0$ and $L\in\N$, then there exist  $A\subseteq\N$ with $\D(A)=1/3$ and an unbounded increasing sequence $\{\lm_j\}$ with the property that
$A\cap(A+P(n))=\emptyset$ whenever $n\in\bigcup_{j=0}^\infty[\lm_j,\lm_j+L]$.
\end{propn}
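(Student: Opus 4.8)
The plan is to construct $A$ as a union of congruence classes that is locally a "bad" set for recurrence along $P$ on a sequence of longer and longer blocks, while still having density exactly $1/3$. The key idea is that if $A$ is a union of residue classes modulo some period $d$, then $A\cap(A+P(n))=\emptyset$ precisely when the shift $P(n)\bmod d$ maps the chosen residue set off itself; and since $P(0)=0$, for a suitable period $d$ there is a range of $n$ near multiples of the "order" of the relevant progression where $P(n)$ stays small and nonzero modulo $d$.

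Concretely, first I would reduce to a single modulus at each scale. Fix $L$. Since $P(0)=0$, write $P(n)=n^{r}R(n)$ with $R(0)\neq 0$, or more simply observe that on any interval of the form $[\lm,\lm+L]$ the values $P(n)$ are controlled once we know $n$ modulo a large enough number. The plan is to choose an increasing sequence of moduli $d_j$ (for instance $d_j = 3\cdot j!$ or $d_j$ a rapidly growing product designed so that $P$ takes only a bounded set of residues modulo $d_j$ on a window of length $L$ around each multiple $\lm_j$ of some divisor of $d_j$), and then pick a residue class structure mod $d_j$ consisting of a $1/3$-fraction of the classes, arranged so that $A$ restricted to the $j$-th block avoids $A+P(n)$ for all $n$ in that window. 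The simplest incarnation: take $A=\{a\in\N : a\equiv 0,1,\text{ or }2\ \text{in a suitable way}\}$ so that locally $A\cap[\lm_j,\lm_j+N_j]$ is $(3\Z)\cap[\lm_j,\lm_j+N_j]$ shifted appropriately, and note $3\Z\cap(3\Z+P(n))=\emptyset$ whenever $3\nmid P(n)$; then choose $\lm_j$ so that for $n\in[\lm_j,\lm_j+L]$ one has $3\nmid P(n)$ — this is possible whenever $P$ is not identically $\equiv 0\bmod 3$, and when it is one passes to a prime $p$ not dividing all coefficients, or uses a prime power, exploiting $P(0)=0$ to guarantee a window where $p\nmid P(n)$ for all $n$ in an interval of length $L$. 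One then patches these finitely-described local pieces together on disjoint blocks $[\lm_j,\lm_j+N_j]$ with $N_j\to\infty$, filling the gaps between blocks with a fixed density-$1/3$ pattern so that $\D(A)=1/3$ overall.

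The main obstacle is handling polynomials $P$ for which $P(n)$ is divisible by every small prime for all $n$ in long intervals — but this cannot happen for all primes simultaneously in a window of bounded length $L$ unless $P\equiv 0$, so the real work is a pigeonhole/Chinese-Remainder argument producing, for each $j$, a prime power $p^{e_j}$ and a residue window of length $L$ on which $p^{e_j}\nmid P(n)$, while $p^{e_j}$ is large enough that a union of $p^{e_j}/3$ residue classes avoiding its own $P(n)$-translate exists. Using $P(0)=0$: near $n\equiv 0\bmod p^{e}$ we have $p^{e}\mid P(n)$, which is the wrong direction, so instead I would look near $n$ with $n\not\equiv 0$; since $P$ is a fixed nonconstant polynomial, the density of $n$ with $p\mid P(n)$ is at most $(\deg P)/p<1$, so for $p$ large there are long runs of $n$ with $p\nmid P(n)$, and these runs have length tending to infinity relative to $L$ as $p\to\infty$. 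Fixing $p$ large (depending only on $L$ and $\deg P$), one finds infinitely many windows $[\lm_j,\lm_j+L]$ avoiding the zero set of $P$ mod $p$; taking the residue set for $A$ on the $j$-th block to be any $(1/3)$-fraction of classes mod $p$ that is disjoint from its translate by $P(n)$ for each such $n$ (possible since each such translate is a nonzero shift and one can greedily pick classes, or just use that for $p\geq 5$ a set of size $\lceil p/3\rceil$ avoiding finitely many prescribed nonzero-shift self-intersections exists by a simple counting argument) completes the construction once the density bookkeeping between blocks is arranged.

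I would conclude by verifying the two required properties: $\D(A)=1/3$ follows because outside the (sparse, but increasingly long) special blocks $A$ follows a fixed $1/3$-density periodic pattern and the special blocks, being constructed from $(1/3)$-fractions of residue classes, also contribute density $\to 1/3$, so the $\limsup$ of $|A\cap[1,N]|/N$ is $1/3$; and the recurrence-avoidance $A\cap(A+P(n))=\emptyset$ for $n\in[\lm_j,\lm_j+L]$ holds by construction on each block, noting that one must also check the translate $A+P(n)$ does not reach into $A$ from a \emph{neighboring} block — this is ensured by making the blocks long compared to $\max_{n\in[\lm_j,\lm_j+L]}|P(n)|$, which is a fixed finite number once $p$ (hence the windows) is fixed and $L$ is given, so choosing $N_j\to\infty$ large enough is automatic.
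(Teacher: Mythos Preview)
Your proposal contains a genuine gap. The key claim --- that for a large prime $p$ one can always find a residue set $S\subseteq\Z/p\Z$ of size $\lceil p/3\rceil$ with $S\cap(S+r)=\emptyset$ for each of the $L+1$ prescribed nonzero shifts $r=P(n)\bmod p$, ``by a simple counting argument'' --- is false. Requiring $S\cap(S+r)=\emptyset$ for every $r$ in a set $R$ makes $S$ an independent set in the Cayley graph on $\Z/p\Z$ with connection set $R\cup(-R)$; greedy or probabilistic counting gives only $|S|\geq p/(2|R|+1)$, and this is essentially sharp. Indeed, if the shifts happen to be $R=\{1,2,\dots,L+1\}$ then any admissible $S$ has all pairwise gaps exceeding $L+1$, forcing $|S|\leq p/(L+2)$, which is far below $p/3$ once $L\geq 2$. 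So you cannot reach density $1/3$ this way unless you control \emph{which} residues the values $P(n)\bmod p$ occupy --- and your argument never does that; it only arranges that they are nonzero. The earlier ``simplest incarnation'' using $3\Z$ fails for the same reason (already for $P(n)=n$ and $L\geq 2$ every window of length $L$ contains a multiple of~$3$).

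The paper's construction avoids the difficulty entirely and is far simpler: there is no prime, no block-by-block patching, and no boundary bookkeeping. One chooses $a\in\N$ so that $P$ is increasing on $[aL,\infty)$ with $2P(aL)\geq P((a+1)L)$, sets $M=P(aL)$, and takes $A$ to be the \emph{globally periodic} set of period $3M$ with $A\cap[1,3M]=[M+1,2M]$, so that $\D(A)=1/3$ is immediate. For $n\in[aL,(a+1)L]$ one has $P(n)\in[M,2M]$, whence $A+P(n)$ lies (mod $3M$) in $[2M+1,3M]\cup[1,M]$, disjoint from $A$; since $P(n)\bmod 3M$ depends only on $n\bmod 3M$, the same holds on every translated window $[\lm_j,\lm_j+L]$ with $\lm_j=aL+3Mj$. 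The idea you were missing is to pick the modulus $3M$ with $M$ equal to the \emph{value} of $P$ at the window's left endpoint, so that the shifts $P(n)$ automatically fall into a single middle-third interval mod $3M$; then an interval of length $M$ (rather than a scattered union of residue classes) works as the set $A$, and one period handles all $j$ at once.
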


\begin{proof}
With out loss in generality we assume that the leading coefficient of $P$.
Set $M=P(aL)$ with $a\in\N$ chosen so that $P$ is increasing and $2P(aL)\geq P((a+1)L )$. We definine $A\subseteq\N$ such that $A=A+3M$ and $A\cap[1,3M]=[M+1,2M]$.

Since $P(n)=P(m)\pmod{3M}$ whenever $n=m\pmod{3M}$, it is easy to see that if $\lm_j=j3M+aL$, then the fact that $A\cap(A+P(n))=\emptyset$ whenever $n\in[\lm_j,\lm_j+L]$ for some $j$, follows from the fact that this holds for $j=0$ (as can be easily verified by the reader).
\end{proof}

%%%%%%%%%%%%%%%%%%%%%%%%%%%%%%%%%%%%%%%%

\end{document}